\newtheorem{theorem}{\bf Theorem}[section]
\newtheorem{condition}[theorem]{\bf Condition}
\newtheorem{remark}[theorem]{\bf Remark}
\newtheorem{lemma}[theorem]{\bf Lemma}
\crefname{condition}{condition}{conditions}
\Crefname{condition}{Condition}{Conditions}
\newcommand{\real}{\mathbb{R}}
\newcommand{\znat}{\mathbb{Z}}
\newcommand{\Mb}[1]{\left[{#1}\right]}
\newcommand{\captionfont}{}
\DeclareMathOperator{\Ein}{Ein}
\DeclareMathOperator{\E}{E}
\renewcommand{\setminus}{-}
\begin{document}
\title{Active Thermal Cloaking and Mimicking}

\author[M. Cassier, T. DeGiovanni, S. Guenneau, F. Guevara Vasquez]{
Maxence Cassier$^{1}$, Trent DeGiovanni$^{2}$, S\'ebastien Guenneau$^{3}$, and Fernando Guevara Vasquez$^{2}$}

\address{$^{1}$Aix Marseille Univ, CNRS, Centrale Marseille, Institut Fresnel, Marseille, France\\
$^{2}$University of Utah, Mathematics Department, Salt Lake City UT 84112, USA\\
$^{3}$UMI 2004 Abraham de Moivre-CNRS, Imperial College London, London SW7 2AZ, UK
}

\subjclass[2010]{31B10, 
35K05, 
65M80 
}

\keywords{Heat equation, Active cloaking, Potential theory, Green identities}


\begin{abstract}
We present an active cloaking method for the parabolic heat (and mass or light diffusion) equation that can hide both objects and sources. By active we mean that it relies on designing monopole and dipole heat source distributions on the boundary of the region to be cloaked. The same technique can be used to make a source or an object look like a different one to an observer outside the cloaked region, from the perspective of thermal measurements. Our results assume a homogeneous isotropic bulk medium and require knowledge of the source to cloak or mimic, but are in most cases independent of the object to cloak. 
\end{abstract}
\maketitle


\section{Introduction}\label{sec:intro}
Certain solutions to the heat equation can be reproduced inside or outside a closed surface by a source distribution on the surface determined by Green's identities. In particular, given a solution to the heat (or mass or light diffusion) equation in a homogeneous medium  and with no sources inside of a domain, it is possible to reproduce it inside the domain with a distribution of sources on the surface of the domain, while also giving a zero solution outside.  We call this the {\em interior reproduction problem}, see \cref{fig:cartoon1}(a).  Similarly, the {\em exterior reproduction problem} is to reproduce a solution to the heat equation in a homogeneous medium with no sources outside of a domain, while keeping a zero solution inside the domain (see \cref{fig:cartoon1}(b)). As we shall see, a growth condition  for the heat equation solution is needed to guarantee that the exterior reproduction problem can be solved. 

This growth condition plays the same role as a radiation boundary condition for the Helmholtz equation (\cref{sec:irheq}). By combining solutions to interior/exterior reproduction problems we can achieve cloaking or mimicking for the heat equation in the following scenarios.

\begin{figure}
    \centering
    \begin{tabular}{cc} 
     \includegraphics[width=30mm]{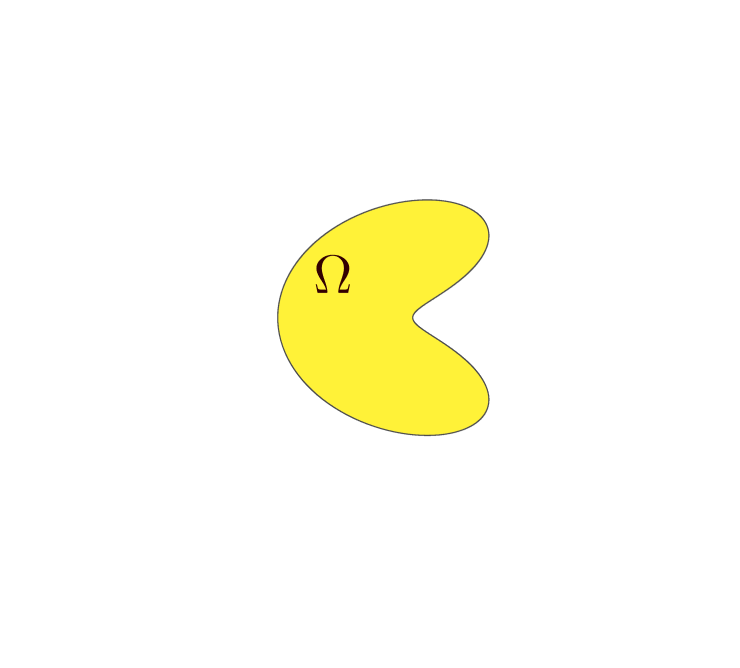} &
     \includegraphics[width=30mm]{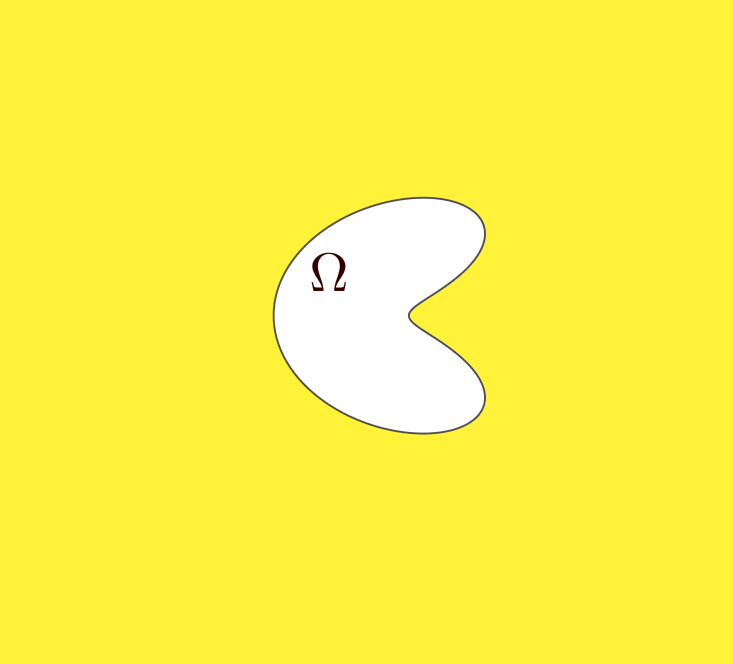} \\
    {\captionfont (a)} &
    {\captionfont (b)}\\
    \end{tabular}
    \caption{In (a) we illustrate the ``interior reproduction problem'' which consists of reproducing a solution to the heat equation in the interior of a bounded region $\Omega$ (in yellow), while enforcing a zero solution outside of $\overline{\Omega}$ (in white) by placing heat sources on the boundary $\partial\Omega$. In (b) we illustrate the ``exterior reproduction problem'' in a similar way.}
    \label{fig:cartoon1}
\end{figure}

{\bf Interior cloaking of a source}: Given a localized heat source, find an active surface or cloak surrounding the source so that the source cannot be detected by thermal measurements outside the cloak (\cref{sec:cloak:src}).

{\bf Interior cloaking of an object}:  Given a passive object (e.g. an inclusion), find an active surface or cloak surrounding the object so that the temperature distribution outside the cloak is indistinguishable from having a region of homogeneous medium instead of the cloak and the object (\cref{sec:cloak:obj}).

{\bf Source mimicking problem:} Given a localized source, find an active surface or cloak surrounding it so that the source appears as a different source for an observer outside the cloak (\cref{sec:mimic:src}).

{\bf Object mimicking problem:} Given a passive object  inclusion, find an active surface or cloak surrounding it so that the object appears as a different object for an observer outside the cloak (\cref{sec:mimic:obj}).

Active sources for the ``interior cloaking of an object'' problem have already been proposed and demonstrated experimentally for the steady state heat equation in \cite{Nguyen:2015:ATC}. The idea is to use Peltier elements to dissimulate a hole in a conductive plate under a steady state temperature distribution. A Peltier element is a thermoelectric heat pump that moves heat by putting an electric current across a metal/metal junction (see e.g. \cite{DiSalvo:1999:TCP}). Our approach deals with time varying solutions to the heat equation. It could be implemented with a setup similar to \cite{Nguyen:2015:ATC} by using Peltier devices that are either used to transport heat within the plate or act as heat sources/sinks on the plate by transporting heat between the environment and the plate, as illustrated in \cref{fig:peltier}. An alternative route using a single active dipole source placed inside the object to cloak in a constant gradient steady state regime is proposed in \cite{Xu:2019:DAT}. 

\begin{figure}
    \centering
    \includegraphics[width=0.4\textwidth]{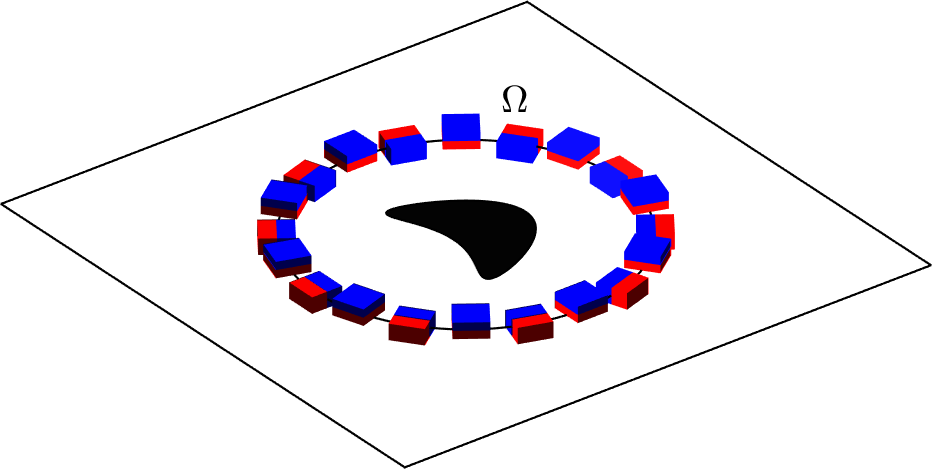}
    \caption{An illustrative example of an arrangement of Peltier elements that could be used to cloak objects (e.g. a kite) inside a two-dimensional region $\Omega$, illustrated here by a disk within a heat conducting plate. Each Peltier device is represented by two adjacent red and blue boxes, where the heat flux it can create is oriented in the direction normal to their interface. We have represented both Peltier elements that transport heat within the plate (across $\partial \Omega$) and also between the exterior and the plate.}
    \label{fig:peltier}
\end{figure}

Although all the results are presented in the context of the heat equation, they also apply to the diffusion equation, which can be used to model e.g. diffusion of a species in a porous medium. In this case the active surface would consist of pumps that can transport the species either across the medium or between the medium and the environment. For the diffusion equation case, we can either hide or imitate a source or an inclusion with different diffusivity properties.

Controlling the heat flux may find applications in enhancing the efficiency of thermal devices in solar thermal collectors, protecting electronic circuits from heating, or the design of thermal analogs of electronic transistors, rectifiers, and diodes \cite{Peralta:2019:CHM}. Moreover, all our results could be easily adapted to control of mass diffusion with potential applications ranging from biology with the delay of the drug release for therapeutic applications \cite{Guenneau:2013:FSL,Puvirajesinghe:2017:FSL} to civil engineering with the control of corrosion of steel in reinforced concrete structures\cite{Zeng:2013:CID}. We further note that in many media, such as clouds, fog, milk, frosted glass, or media containing many randomly distributed scatterers, light is not described by the macroscopic Maxwell equations, but rather by the Fick's diffusion equation as photons of visible light perform a random walk. Cloaking for diffusive light was experimentally achieved in \cite{Schittny:2014:DLSM} using the transformed Fick's equation \cite{Guenneau:2013:FSL} and this suggests potential applications of our work in control of diffusive light as well.

Because we are using source distributions, we can achieve cloaking of inclusions and sources, regardless of how complicated they are and on arbitrarily large time intervals. One drawback of our method is that the source distribution completely surrounds the object or source that we want to cloak or mimic. Another drawback is that we assume perfect knowledge of the fields to reproduce on a surface, however we expect this can be relaxed, as we demonstrate numerically in \cref{sec:irheq:num}. Apart from the active cloaking strategies for the steady state heat equation in \cite{Nguyen:2015:ATC,Xu:2019:DAT}, there are passive cloaking methods for the heat equation that use carefully crafted materials to hide objects \cite{Guenneau:2012:TTC,Ma:2013:TTC,Craster:2018:CMH,Schittny:2013:ETT,Guenneau:2013:FSL}. Such materials, whose effective conductivity mimics that in the heat equation after a suitable change of variables has been made, are quite bulky. In \cite{Schittny:2013:ETT}, some proof of concept of passive thermal cloaking was achieved with a metamaterial cloak consisting of 10 concentric layers mixing copper and polydimethylsiloxane in a copper plate. However, it has been numerically shown using homogenization in \cite{Petiteau:2014:SEETC} that one would require over 10,000 concentric layers with an isotropic homogeneous conductivity to accurately mimic the required anisotropic heterogeneous conductivity within a thermal cloak in order to achieve some markedly improved cloaking performance in comparison with \cite{Schittny:2013:ETT}.

The idea of using active sources based on the Green identities to cloak objects was first proposed for waves by Miller~\cite{Miller:2005:OPC}. The way of finding the sources for active cloaking is similar to that in active sound control for e.g. noise suppression \cite{Ffowcs:1984:RLA,Elliot:1991:ACO}. One problem with this approach is that the sources completely surround the cloak. However only a few sources are needed to cloak as was shown in \cite{Guevara:2009:BEO,Guevara:2009:AEC,Guevara:2011:ECA, Norris:2012:SAF, Guevara:2012:MAT, Guevara:2013:TEA} for the Laplace and Helmholtz equations. The approach can be extended to elastic waves \cite{Norris:2014:AEC} and flexural (plate) waves \cite{O'Neill:2015:ACO,McPhedran:2016:ACO}. The active cloaking approach can be applied to the steady state diffusion equation with the role of sources and sinks over a thick coating played by chemical reactions\cite{Avanzini:2020:CC}. Active cloaking has been demonstrated experimentally for the Laplace equation \cite{MA:2013:EOA,Ma:2016:OAC}, electromagnetics \cite{Selvanayagam:2013:EDO} and the steady state heat equation \cite{Nguyen:2015:ATC,Xu:2019:DAT}. Finally the illusion/mimicking problem was first proposed using metamaterials via a transformation optics approach \cite{Lai:2009:IOT} and then with active exterior sources \cite{Zheng:2010:EOC}. 

We start in \cref{sec:irheq} by recalling results on representing solutions to the heat equation by surface integrals. This section includes a growth condition on heat equation solutions that is sufficient to ensure that the exterior reproduction problem is solvable and numerical experiments illustrating both the interior and exterior reproduction problems in two-dimensions. We also underline properties related to the 
maximum principle of the heat equation that we use on one hand to point out some stability of the two reproduction
problems and on the other hand to interpret the numerical error in our simulations.
Then in \cref{sec:cloak} we explain how to cloak a source (\cref{sec:cloak:src}) or an object (\cref{sec:cloak:obj}). The mimicking problem is presented in \cref{sec:mimic} for both mimicking objects (\cref{sec:mimic:obj}) and sources (\cref{sec:mimic:src}). The numerical method we use to illustrate our approach in two-dimensions is explained in \cref{sec:numerics}. Finally, our results are summarized in \cref{sec:summary}.

\section{Integral representation of heat equation solutions}
\label{sec:irheq}
We start by recalling results on boundary integral representation of solutions to the heat equation. Concretely we show in sections~\cref{sec:irheq:int} and \cref{sec:irheq:ext} how to use a distribution of monopole and dipole heat sources on a closed surface, an ``active surface'', to reproduce a large class of solutions to the heat equation inside and outside the surface. We include a two-dimensional numerical study (\cref{sec:irheq:num}) of how the reproduction error is affected by the time step, the boundary discretization and errors in the boundary density we use to represent the fields.

The temperature $u(x,t)$ of a homogeneous isotropic body satisfies the heat equation
\begin{equation}\label{eqn:heat}
\rho c u_t = \kappa  \Delta u +\widetilde{h},~\text{for }~t > 0,
\end{equation}
where $u$ is in Kelvin, $x$ is the position in meters, $t$ is the time in seconds, $\kappa$ is the thermal conductivity (W m$^{-1}$K$^{-1}$), $c$ is the specific heat (JK$^{-1}$kg$^{-1}$), $\rho$ is the mass density (kg m$^{-3}$) and $\widetilde{h}(x,t)$  is a source term (W m$^{-3}$).  To simplify the exposition we consider instead
\begin{equation}\label{eqn:heat_bis}
u_t = k  \Delta u +h,~\text{for }~t > 0,
\end{equation}
where $k = \kappa/\rho c$ is the thermal diffusivity (m$^2$ s$^{-1}$) and $h = \widetilde{h}/\rho c$ is the source term (K s$^{-1}$). In dimension $d$ the Green function or heat kernel for \eqref{eqn:heat_bis} is 
\begin{equation}
K(x,t) = 
\begin{cases}
    (4\pi kt)^{-d/2}\exp[-|x|^2/4kt], &\text{for}~t>0 \mbox{ and } x\in \real^d, \\
        0, &\text{otherwise},
    \end{cases}
    \label{eqn:fundamental_solution}
\end{equation}
where $| \cdot |$ is the Euclidean norm in $\real^d$. We point out that outside the origin $(x,t)=(0,0)$, $K(x,t)$ is a smooth function even on the line $t=0$. This can be shown directly or by using the hypoellipticity  property of the heat operator. Namely, as the heat kernel  solves the homogeneous heat equation in the distributional sense on any open set that does not contain the origin, by hypoellipticity (see \cite{Lions:1973:PDE} theorem 1.1 page 192) $K(x,t)$ can be extended as a $C^{\infty}$ function on $(\real^d\times \real)\setminus\{(0,0)\}$. Thus $K(x,t)$ is a smooth solution of the heat equation on any open subset of this set.

We consider a bounded non empty open set $\Omega$ with Lipschitz boundary $\partial \Omega$ and possibly multiple connected components in $\real^d$, $d \geq 2$. The interior reproduction problem (\cref{sec:irheq:int}) is to reproduce a solution of the homogeneous heat equation in the space-time cylinder $\Omega \times (0,\infty)$ by placing appropriate source densities on its boundary $\partial \Omega \times [0,\infty)$ and possibly on $\Omega \times \{0\}$ (initial condition). The sources are chosen such that the fields vanish in $(\real^d - \overline{\Omega}) \times (0,\infty)$ (where $\overline{\Omega}=\Omega \cup \partial \Omega$ denotes the closure of $\Omega$). If the initial condition is harmonic, it is sufficient to have sources on $\partial\Omega \times [0,\infty)$ only. For the exterior reproduction problem (\cref{sec:irheq:ext}) we seek to reproduce  a solution of the homogeneous heat equation in $(\real^d -\overline{ \Omega}) \times (0,\infty)$ using sources on $\partial\Omega \times [0,\infty)$ and possibly on $(\real^d -\overline{ \Omega}) \times \{0\}$. The fields are required to vanish in $\Omega \times (0,\infty)$.  

\subsection{Reproducing fields in the interior of a bounded region}
\label{sec:irheq:int}
The goal here is to reproduce solutions $u$ to \eqref{eqn:heat_bis} in $\Omega$ by controlling sources on $\partial \Omega$ while leaving the exterior unperturbed. More precisely, for some temperature field $u(x,t)$, we wish to generate $u_\Omega(x,t)$ such that for $t> 0$:
\begin{equation}
    u_\Omega(x,t) = \begin{cases}
        u(x,t), & x \in \Omega, \\
        0, & x \notin \overline{\Omega}.
    \end{cases}
    \label{eq:uomega}
\end{equation}
Notice that $u_\Omega(x,t)$ is not defined for $x \in \partial\Omega$, as usual in boundary integral equations. 
For initial condition $u(x,0) = f(x)$, $x \in \Omega$ and a source term $h(x,t)$ supported in $\real^2-\Omega$, this can be achieved via the Green identities (see e.g.  \cite{Pina:1984:ATH,Costabel:1990:BIO,Friedman:1964:PDE})
\begin{equation}
\begin{split}
u_\Omega(x,t)= 
\int_0^t ds\int_{\partial \Omega} dS(y) [\frac{\partial u}{\partial n}(y,s)K(x-y,t-s)-u(y,s)\frac{\partial K}{\partial n}(x-y,t-s)]\\ 
+\int_{\Omega}f(y)K(x-y,t)dy,~~t>0,
\end{split}\label{eqn:brep}
\end{equation}
here $n(y)$ is the outward pointing unit length normal vector at $y \in \partial \Omega$. The Green identities guarantee that $u_\Omega(x,t)=0$ for $x \notin \overline{\Omega}$, as desired. The first term in the integral \eqref{eqn:brep} is the single layer potential (a collection of monopole heat sources) and the second is the double layer potential (a collection of dipole heat sources).  For zero initial conditions, the solution $u$ is completely represented within $\Omega$ by the single and double layer potentials, with  densities given by the field to be reproduced and its normal derivative on  $\partial\Omega$.
We point out that the representation formula \eqref{eqn:brep} holds for instance if  $u \in C^2(\overline{\Omega}\times [0,+\infty))$.  A less restrictive condition, for zero initial condition, is to assume some 
Sobolev regularity for $u$, see e.g. \cite[theorem 2.20]{Costabel:1990:BIO}. In this less regular setting, the first integral in $\eqref{eqn:brep}$ becomes a duality pairing between boundary Sobolev spaces.

\begin{remark}[Causality and instantaneous control]
\label{rem:causality}
The boundary integral representation \eqref{eqn:brep} is {\em causal} in the sense that to reproduce $u_{\Omega}(x,t)$, $t>0$, we only need information about $u$ in the past, i.e. for times before the present time $t$. Moreover, the time convolution \eqref{eqn:brep} suggests that at the present time $t$, we only require control of heat sources localized in time to the present time $t$.  Indeed, the integral over $\partial\Omega$ in \eqref{eqn:brep} is a collection of monopole and dipole sources localized in time to $s$ and that depends only on knowing $u$ and $\partial u / \partial n$ at time $s$. Moreover the contribution of past $s$, i.e. with $s<t$, amounts to the memory effect of the bulk.
Thus for experimental purposes, the boundary integral representation could be approximated by  e.g. Peltier devices.
\end{remark}

A numerical example is given in \cref{fig:int_greens}. Here the field $u$ is generated by a point source at $x=(0.25,0.25)$ and $t=0$. For the heat equation we took $k=0.3$ and the domain is $\Omega=B(x_0,r_0)$, the open ball with center $x_0 = (0.5,0.5)$ and radius $r_0=0.25$. It should be noted that in all of the numerical examples the thermal diffusivity $k$ is chosen for the convenience of computations, and may be different depending on the numerical experiment. We computed the fields on the unit square $[0,1]^2$ with a $200\times 200$ uniform grid at time $t=0.2$ s. The integral \eqref{eqn:brep} is approximated using the midpoint rule in time with 200 equal length subintervals of $[0,t]$  and the trapezoidal rule on $\partial \Omega$ with 128 uniformly spaced points. A more detailed explanation of the numerical method appears in \cref{sec:numerics}. The accuracy of our numerical method with respect to discretization changes and noise is evaluated in \cref{sec:irheq:num}. \Cref{fig:int_greens}(c) shows a plot of the $\log_{10}$ error between the computed field and the desired one. It can be observed that the accuracy of the numerical method improves as we move away from $\partial\Omega$.

\begin{figure}
    \centering
    \begin{tabular}{ccc}
    \includegraphics[width=.3\textwidth]{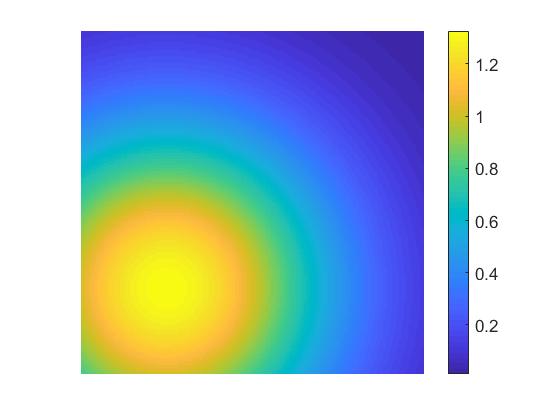} &
    \includegraphics[width=.3\textwidth]{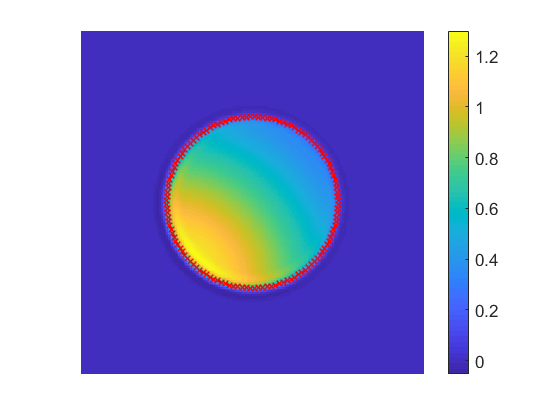} &
    \includegraphics[width=.3\textwidth]{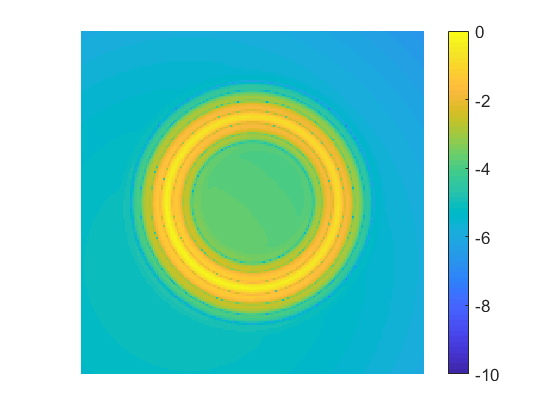}\\
    {\captionfont (a) Original field} 
    & {\captionfont (b) Reconstructed field} 
    & {\captionfont (c) $\log_{10}$ plot of errors}
    \end{tabular}
    \caption{Numerical example of the interior field reproduction problem for a point source located at $(1/4,1/4)$ and $t=0$~s. A snapshot of the original field at time $t=0.2$~s appears in (a). The field is reconstructed inside a disk of radius $1/4$ centered at $(1/2,1/2)$ by using only heat sources on the corresponding circle. In (c) we show the $\log_{10}$ of the reconstruction error (the absolute value of the difference between the exact $u_\Omega$ and its numerical approximation). We generated a plot similar to (c) by taking the maximum over the time interval $[0.2,0.3]$ at each grid point (the plot being very similar to (c), we include the code to generate it as supplementary material). This indicates that the maximum error is attained near $\partial\Omega$ in space (and in time at $t=0.2$ on the time interval $[0.2,0.3]$), conforming to the maximum principle (applied with initial time $t=0.2$) for the interior problem (\cref{rem.maximunprincpboundeddomain}) and the exterior one (\cref{rem.maximunprincpunboundeddomain}). 
    We can use the maximum principle on the computed fields because the numerical method we use generates solutions to the heat equation (see \cref{sec:numerics} for more details).
    }
    \label{fig:int_greens}
\end{figure}

We point out that the term in \eqref{eqn:brep} involving the initial condition is very different from the other terms because it is an integral over $\Omega$ rather than just the boundary $\partial \Omega$. 
If the initial condition $f(x)$ is non-zero but harmonic, this integral over $\Omega$ can also be expressed as an integral over $\partial\Omega$ by using Green identities as follows (see \cite{Pina:1984:ATH})
\begin{equation}
\int_{\Omega}f(y)K(x-y,t)dy = \int_{\partial \Omega}dS(y)[f(y)\frac{\partial \phi}{\partial n}(x-y,t)-\phi(x-y,t)\frac{\partial f}{\partial n}(y)],
\label{eqn:initial}
\end{equation}
where $\phi(x,t)$ is given in two dimension by  \cite{Pina:1984:ATH}:
    $$
        \phi(x,t)= \frac{-1}{4\pi}\Ein\bigg(\frac{r^2}{4kt} \bigg),
    $$
where $\Ein(z) = \E(z) + \ln z + \gamma$, $\gamma$ is the Euler constant and 
   \[
   \E(z) = \int_z^{+\infty} \frac{e^{-\zeta}}{\zeta} d \zeta
   \]
   is the exponential integral (see e.g. \cite[eq. 6.2.2 and 6.2.4]{NIST:DLMF}). The expression for $\phi$ in three dimensions is given in \cite{Pina:1984:ATH}. From the experimental perspective, it is not clear whether the kernels in the boundary integrals \eqref{eqn:initial} can be achieved with heat monopole and dipole sources. So we assume from now on, that the initial condition is harmonic but does not need to be reproduced using sources on $\partial\Omega$. Under this assumption, we can simply subtract the initial condition to obtain the heat equation with zero initial condition, which is the case that we focus on.
\begin{remark}\label{rem.maximunprincpboundeddomain}
As formula \eqref{eqn:brep} uses the boundary data and the initial condition  to express $u_{\Omega}$, this reconstruction of $u$ satisfies some stability due the maximum principle applied to the heat equation (a property that does not hold e.g. for the wave equation).
Indeed, for any $T>0$, the maximum principle \cite{Brezis:1964:PDE,Evans:2010:PDE,Kress:2014:LIE} states that a continuous function on $\overline{\Omega}\times [0,T]$ that solves the homogeneous heat equation \eqref{eqn:heat_bis} on $\Omega\times (0,T)$ (in the distributional sense) reaches its minimum and maximum  either at $t=0$ or at any time $t\in [0,T]$ on the boundary $\partial\Omega$. For instance, the solution of the initial-Dirichlet boundary value problem (with no sources) described in the  chapter 7 page 171-172  of \cite{Kress:2014:LIE}  satisfies the above conditions. In particular, this solution is continuous up to the boundary, i.e. on $\overline{\Omega}\times [0,T]$. To obtain such continuity, the continuous initial data and the continuous Dirichlet data have to match on $\partial \Omega$ at $t=0$, see e.g. \cite{Kress:2014:LIE}. 

To understand the stability, we take two solutions $u_{j}\in C^0(\overline{\Omega}\times [0,T])$, $j=1,2$,  that satisfy the homogeneous heat equation  \eqref{eqn:heat_bis}  in $\Omega\times (0,T)$ for $T>0$ in the distributional sense.
In this setting  solutions in the distributional sense are also smooth solutions of the homogeneous heat equation \eqref{eqn:heat_bis} (since by hypoellipticity,  $u_j \in C^{\infty}(\Omega\times (0,T))$ for $j=1,2$, see e.g. \cite{Lions:1973:PDE} theorem 1.1 page 192).
 Thus, one can apply  the maximum principle (see \cite{Brezis:1964:PDE}, theorem 10.6 page 334) to obtain that
\begin{equation}\label{eq.max1boundeddomain1}
\max_{\overline{\Omega}\times [0,T]}|u_2(x,t)-u_1(x,t)|=\max_{(\Omega \times \{0 \})\cup (\partial \Omega \times [0,T])}|u_2(x,t)-u_1(x,t)|.
\end{equation}
Moreover if the initial conditions are harmonic, using the maximum principle for the Laplace equation gives 
\begin{equation}\label{eq.max1boundeddomain2}
\max_{\overline{\Omega}\times [0,T]}|u_2(x,t)-u_1(x,t)|=\max_{\partial \Omega \times [0,T]}|u_2(x,t)-u_1(x,t)|.
\end{equation}
Thus, in the space of solutions of the homogeneous heat equation  (with the regularity described above), an error committed on the initial condition $u(x,0)$ or the  boundary Dirichlet data of a solution $u$ (i.e. the dipole distribution) on $\partial \Omega \times [0,T]$ controls the error (in the supremum norm) in the reconstruction of $u_{\Omega}$ in  $\Omega\times (0,T]$.

Finally, we point out an important property that constrains the behavior of $u_2-u_1$ if the maximum  in \eqref{eq.max1boundeddomain1} is attained at a point not located at the boundary or at the initial time.
Under the additional assumption that $\Omega$ is connected, one shows based on the mean value property of the heat operator and a connexity argument (as in \cite{Evans:2010:PDE}, section 2.2.3, theorem 4 page 54-55), that if there exists a point $(x_0,t_0)\in (0,T]\times \Omega$ such that $|u_2-u_1|$ reaches its maximum at $(x_0,t_0)$ in \eqref{eq.max1boundeddomain1}, then $u_2-u_1$ has to be constant in $\overline{\Omega}\times [0,t_0]$. Indeed, for formula \eqref{eq.max1boundeddomain2}, this property holds also for $t_0=0$  since the initial condition is harmonic and the Laplace operator satisfies a mean value property.
\end{remark}

\subsection{Reproducing fields exterior to a bounded region}
\label{sec:irheq:ext}
For the exterior reproduction problem we seek to reproduce solutions to \eqref{eqn:heat_bis} outside of $\overline{\Omega}$ by controlling sources on $\partial \Omega$, while leaving the interior unperturbed. That is, for some temperature field $v(x,t)$ solving the heat equation we wish to generate $v_\Omega(x,t)$, such that
\begin{equation}
 v_\Omega(x,t) = 
    \begin{cases}
       0, &x \in \Omega, \\
        v(x,t), & x \notin \overline{\Omega},
    \end{cases}
    \label{eq:vomega}
\end{equation}
for $t > 0$. Notice that we have used the subscript $\Omega$ differently in \eqref{eq:vomega} than in \eqref{eq:uomega}. We adhere to the convention that $u_\Omega$ always refers to the interior reproduction problem of heat equation solution $u$ and $v_\Omega$ refers to the exterior reproduction of a field $v$. The problem of reproducing $v_\Omega$ can be solved when the source term associated with $v(x,t)$ is supported in  $\overline{\Omega}$ for all time.  We only consider the case where $v(x,0) = 0$. Non-zero initial conditions are left for future studies.

Without further assumptions the exterior reproduction problem may not have a unique solution as can be illustrated by the one-dimensional non-uniqueness example by Tychonoff \cite{Tychonoff:1935:TDP}. Uniqueness for the Dirichlet problem can be guaranteed via the maximum principle  \cite[chapter 2, section 3, theorem 7]{Evans:2010:PDE} (see also \cref{rem.maximunprincpunboundeddomain}) or via Sobolev regularity estimates in space and time \cite{Arnold:1987:BIE,Noon:1988:SLH}. For the transmission problem, growth restrictions in the Laplace domain are used to prove uniqueness in \cite{Qiu:2019:TDB}. Here we want to establish a boundary representation formula for exterior solutions to the heat equation, which uses both Dirichlet and Neumann data. Such exterior representation formula has already been mentioned in \cite{Noon:1988:SLH,Qiu:2019:TDB,Dohr:2019:DPS}, but without giving an explicit growth condition on the heat equation solution that guarantees its validity. We give a growth condition for the heat equation, analogous to the Sommerfeld radiation condition for the Helhmholtz equation \cite{Colton:2013:IAE} (a comparison between these two conditions is in \cref{rem:growth}).

To prove a boundary potential formula for the exterior reproduction problem, we follow the same steps as in \cite{Colton:2013:IAE} for the Helmholtz equation. Namely we use the interior reproduction problem (\cref{sec:irheq:int}) on the complement of $\overline{\Omega}$ truncated to a ball of sufficiently large radius $r$ (see \cref{fig:proof}), and then give a sufficient condition guaranteeing that the contribution from the sources at $|x| =r$ vanishes as $r \to \infty$, allowing heat equation solutions outside of $\overline{\Omega}$ to be reproduced by only controlling sources at $\partial\Omega$. The sufficient condition that we impose on the growth of $v(x,t)$ is close to the growth condition for the exterior Dirichlet problem uniqueness, see \cref{rem:growth}.
\begin{condition}[Growth condition]
 \label{cond:rad1}
 A differentiable function $v(x,t)$, where $x \in \real^d$,  $d\geq 2$, is said to satisfy the ``growth condition'' if there exists an $r_0 > 0$, such that if $r>r_0$,
\begin{equation}
 \left| \frac{\partial v}{\partial n}(r\xi, t)+\left(\frac{2r}{4kt}\right)v(r\xi, t) \right|  \leq  C r^m e^{a \, r^b}, \;\;\; \; \forall t>0, \; \xi \in S^d(0,1),
 \label{eqn:rad_con1}
\end{equation}
where $m$ is an integer, $C>0$, $a\geq 0$ are constants, the exponent $b$ satisfies $0\leq b<2$  and $S^d(0,1)$ is the sphere of radius $1$ centered at the origin in $d$ dimensions.
\end{condition}

\begin{remark}
\label{rem:growth}
The bound in \cref{cond:rad1}, is not as restrictive as the Sommerfeld radiation condition for the Helmholtz equation because of the Gaussian spatial decay of the heat kernel at fixed positive time. Indeed the Green function for the Helmholtz equation decays in space faster than $r^{-m}$ as $r\to \infty$, where $m>0$ depends on the dimension and $r = |x|$. On the other hand, the heat kernel decays faster than $r^{m} e^{-a \,r^b}$ as $r \to \infty$, with $a>0$, $0\leq b<2$ and $m \in \znat$. This is how we motivate the bound in \cref{cond:rad1}. However, we conjecture that it may be possible to improve the bound to allow $b=2$ (Gaussian decay over a polynomial), which would bring it on the same par as the growth condition guaranteeing uniqueness for the Dirichlet problem outside of a bounded domain (\cref{rem.maximunprincpunboundeddomain}). We point that \cref{cond:rad1}  is satisfied by a large class of solutions of the heat equations \eqref{eqn:heat} in $(\real^d\setminus \overline{\Omega}) \times (0,+\infty)$ that includes  in particular the heat kernel and any of its spatial derivatives (see \cref{lem:heatkernel}). 
\end{remark}

\begin{figure}
\centering
\includegraphics[width=30mm]{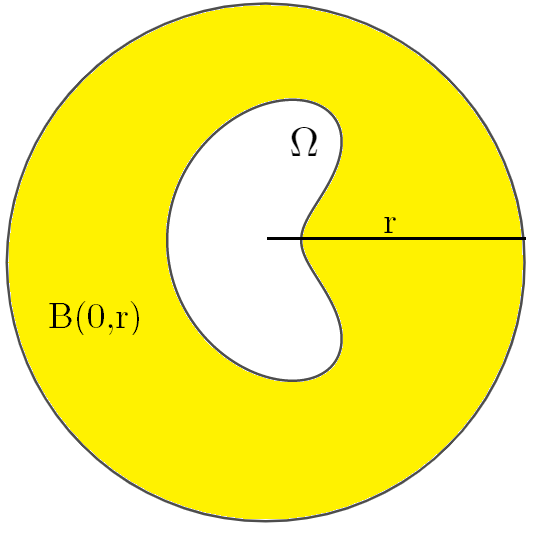}
\caption{To show that the exterior reproduction problem has a solution outside of a bounded region $\Omega$ we use the interior reproduction problem in the yellow region. \Cref{thm:rad_cond} shows that for fields satisfying \cref{cond:rad1}, the contribution of the sources on the sphere $S^d(0,r)$ of radius $r$ vanishes as $r\to\infty$ for $d\geq 2$.}
\label{fig:proof}
\end{figure}

\begin{theorem}
\label{thm:rad_cond}
 Let  $v\in C^2\big((\real^d\setminus \Omega)\times [0,+\infty)\big)$ be a solution to the  heat equation  \eqref{eqn:heat} in $(\real^d\setminus \overline{\Omega})\times (0,+\infty) $ for $d\geq 2$,
 with zero initial condition and a source term spatially supported in the compact set $\overline{\Omega}$. Furthermore, assume that $v$ satisfies the growth \cref{cond:rad1}, then $v$ can be reproduced for $t>0$ in the exterior of $\overline{\Omega}$ by the boundary representation formula
\begin{equation}
v_{\Omega}(x,t)= -\int_0^t ds \int_{\partial \Omega} dS(y) [ \frac{\partial v}{\partial n}(y,s)K(x-y,t-s)-v(y,s)\frac{\partial K}{\partial n}(x-y,t-s)],
\label{eqn:outer_brep}
\end{equation}
where $v_{\Omega}$ is as in \eqref{eq:vomega}.
\end{theorem}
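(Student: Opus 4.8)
The plan is to adapt the classical argument for the exterior Helmholtz representation in \cite{Colton:2013:IAE}: apply the already established interior representation \eqref{eqn:brep} on the exterior domain truncated to a large ball, and then let the ball grow, showing that \cref{cond:rad1} forces the contribution of the truncating sphere to vanish. Concretely, fix $r$ large enough that $\overline{\Omega}\subset B(0,r)$ and set $\Omega_r=B(0,r)\setminus\overline{\Omega}$, a bounded Lipschitz domain with boundary $\partial\Omega\cup S^d(0,r)$. Since $\overline{\Omega_r}\subset\real^d\setminus\Omega$, the hypotheses give $v\in C^2(\overline{\Omega_r}\times[0,\infty))$, $v$ solves the homogeneous heat equation on $\Omega_r\times(0,\infty)$ (its source sits in $\overline{\Omega}$, disjoint from $\Omega_r$), and $v$ has zero initial data there, so \eqref{eqn:brep} applies on $\Omega_r$ with $f\equiv 0$. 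The outward normal of $\Omega_r$ equals $-n$ on $\partial\Omega$ and the outward radial direction on $S^d(0,r)$, so splitting $\partial\Omega_r$ yields, for every $t>0$ and $x\in B(0,r)\setminus\overline{\Omega}$,
\begin{equation*}
v(x,t)=-\int_0^t\!ds\!\int_{\partial\Omega}\!dS(y)\Big[\tfrac{\partial v}{\partial n}(y,s)K(x-y,t-s)-v(y,s)\tfrac{\partial K}{\partial n}(x-y,t-s)\Big]+I_r(x,t),
\end{equation*}
where $I_r(x,t)$ is the corresponding integral over $S^d(0,r)$ with the radial normal; the first term is exactly the right-hand side of \eqref{eqn:outer_brep} and is independent of $r$, and the same computation at $x\in\Omega$ gives $0=(\text{RHS of }\eqref{eqn:outer_brep})+I_r(x,t)$.

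The heart of the proof is to show $I_r(x,t)\to 0$ as $r\to\infty$ for fixed $x\notin\overline{\Omega}$ and $t>0$. Writing $y=r\xi$, $\xi\in S^d(0,1)$, and using $\nabla_zK(z,\tau)=-\tfrac{z}{2k\tau}K(z,\tau)$, one gets on $S^d(0,r)$ that $\tfrac{\partial K}{\partial n}(x-r\xi,t-s)=\tfrac{x\cdot\xi-r}{2k(t-s)}K(x-r\xi,t-s)$, so
\begin{equation*}
I_r(x,t)=\int_0^t\!ds\!\int_{S^d(0,1)}\!r^{d-1}K(x-r\xi,t-s)\Big[\tfrac{\partial v}{\partial n}(r\xi,s)+\tfrac{r-x\cdot\xi}{2k(t-s)}v(r\xi,s)\Big]dS(\xi).
\end{equation*}
The bracket is the growth-condition combination $\tfrac{\partial v}{\partial n}(r\xi,s)+\tfrac{2r}{4ks}v(r\xi,s)$, which \cref{cond:rad1} bounds by $Cr^me^{ar^b}$, plus $\big(\tfrac{r-x\cdot\xi}{2k(t-s)}-\tfrac{r}{2ks}\big)v(r\xi,s)$; the latter requires a pointwise bound on $v$ itself near $S^d(0,r)$, which one extracts from \cref{cond:rad1} by a one-dimensional argument along each ray, since the combination in \eqref{eqn:rad_con1} equals $e^{-r^2/4ks}\partial_r\big(e^{r^2/4ks}v(r\xi,s)\big)$: integrating in $r$ from a fixed radius (where $v$ is bounded by continuity, and tends to $0$ as $s\to0$ by the zero initial condition) gives $|v(r\xi,s)|\le C_tr^{M}e^{ar^b}$, uniformly in $\xi$ and $s\in(0,t]$, and standard parabolic interior (mean-value) estimates — valid up to $t=0$ since the initial data vanishes — propagate this to the same type of bound for $\tfrac{\partial v}{\partial n}(r\xi,\cdot)$ and for $\sup_\sigma|v_t(r\xi,\sigma)|$, the latter controlling the apparently singular factor $\tfrac1s$ near $s=0$ via $v(r\xi,s)=O(s)$. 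Since $K(x-r\xi,t-s)\le(4\pi k(t-s))^{-d/2}e^{-(r-|x|)^2/4k(t-s)}$ for $r>|x|$, and $\tau\mapsto\tau^{-p}e^{-(r-|x|)^2/4k\tau}$ is, for $r$ large, maximal at $\tau=t$, the whole integrand of $I_r$ is $O\big(r^{M'}e^{ar^b-(r-|x|)^2/4kt}\big)$ uniformly in $s\in(0,t)$ and $\xi$; because $0\le b<2$ this tends to $0$, so $I_r(x,t)\to0$.

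Letting $r\to\infty$ in the two identities of the first paragraph, and using that their common first term does not depend on $r$, shows that the right-hand side of \eqref{eqn:outer_brep} equals $v(x,t)$ for $x\notin\overline{\Omega}$ and $0$ for $x\in\Omega$, i.e. it equals $v_\Omega$ as in \eqref{eq:vomega}. The main obstacle is the estimate on $I_r$: \cref{cond:rad1} controls only one specific combination of $v$ and $\partial_nv$ at a single radius, and with the ``wrong'' time weight ($\tfrac{r}{2ks}$ rather than the $\tfrac{r}{2k(t-s)}$ coming from $\partial_nK$ on $S^d(0,r)$), so the work lies in (i) upgrading it, via the ray-ODE step above, to individual polynomial-times-$e^{ar^b}$ bounds on $v$ and $\partial_nv$ near the truncating sphere, uniform in $s\in(0,t]$ — in particular handling the $s\to0$ endpoint with the zero initial condition and parabolic interior estimates — and (ii) checking that the super-exponential ($b<2$) decay of the heat kernel in $r$ really dominates those subexponential bounds so that the limit may be passed inside the integral.
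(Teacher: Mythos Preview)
Your overall architecture --- apply the interior representation \eqref{eqn:brep} on $B(0,r)\setminus\overline{\Omega}$ and show the sphere contribution $I_r$ vanishes as $r\to\infty$ --- is exactly the paper's. The difference is in how $I_r\to 0$ is obtained, and the paper's route is considerably shorter.

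The paper first changes variables $s\mapsto t-s$ in the time convolution, so that $K$ is evaluated at time $s$ and $v$ at time $t-s$. After this switch, and using that on $S^d(0,r)$ the normal derivative of $K$ produces the factor $y\cdot n/(2ks)=r/(2ks)$, the bracket in $I_r$ becomes exactly
\[
\frac{\partial v}{\partial n}(r\xi,t-s)+\frac{r}{2k(t-s)}\,v(r\xi,t-s),
\]
which is the growth-condition combination \eqref{eqn:rad_con1} \emph{at time} $t-s$. Hence \cref{cond:rad1} bounds it by $Cr^{m}e^{ar^{b}}$ directly, uniformly in $s\in(0,t)$ and $\xi$. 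No separate control of $v$, $\partial_n v$, or $v_t$ is needed; all that remains is to estimate $\int_0^t K(x-r\xi,s)\,ds$, which the paper does via the incomplete Gamma function and its asymptotics. You missed this time-switch, so your bracket carries the coefficient $r/(2k(t-s))$ coming from $\partial_nK$, while the growth condition wants $r/(2ks)$ (the time argument of $v$), and this mismatch forces your correction-term analysis.

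Your workaround is legitimate but heavier. The ray-ODE step does give $|v(r\xi,s)|\le C_t r^{M}e^{ar^{b}}$ uniformly in $(s,\xi)$. The subsequent step --- upgrading to a bound on $v_t$ uniform down to $s=0$ with the same $r$-growth --- is the delicate one: ordinary interior parabolic estimates degenerate as $s\to0$ because the parabolic cylinder must fit inside $\{t>0\}$. Your parenthetical ``valid up to $t=0$ since the initial data vanishes'' is correct but deserves one line of justification: the zero extension of $v$ to $t<0$ is a distributional (hence, by hypoellipticity, smooth) solution of the heat equation on $(\real^d\setminus\overline{\Omega})\times\real$, so interior estimates can be applied on unit-size cylinders straddling $t=0$, with the sup of $|v|$ there controlled by the ray-ODE bound. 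With that in hand your uniform-in-$s$ pointwise bound on the integrand and the $b<2$ conclusion go through. So your argument is correct; the paper's time-switch simply eliminates the ray-ODE, the interior estimates, and the $s\to0$ analysis altogether.
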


\begin{proof}
Let $(x,t)\in (\real^d\setminus \overline{\Omega}) \times (0,+\infty)$ be fixed. Without loss of generality we assume that $0 \in \Omega$ and take $r$ to be large enough such that
 $\overline{\Omega} \subset B(0,r)$ and $x\in B(0,r)\setminus\overline{\Omega}$. Since $v\in C^2\big((\real^d\setminus \Omega)\times [0,+\infty)\big)$ satisfies the heat equation with a zero source term outside of $\overline{\Omega}$, we can use \eqref{eqn:brep} with zero initial condition to reproduce $v(x,t)$ on the bounded open set $B(0,r)\setminus\overline{\Omega}$, giving
\begin{equation}
\label{eqn:irep:out}
v_{B(0,r)\setminus \overline{\Omega}}(x,t)=  I_1(x,t;r) + I_2(x,t)
\end{equation}
where 
\begin{equation}
 \begin{aligned}
 I_1(x,t;r) &= \int_0^t ds \int_{S(0,r)} dS(y) \Mb{\frac{\partial v}{\partial n}(y,s)K(x-y,t-s)-v(y,s)\frac{\partial K}{\partial n}(x-y,t-s)},~\text{and}\\
 I_2(x,t) &= -\int_0^t ds \int_{\partial \Omega} dS(y) \Mb{ \frac{\partial v}{\partial n}(y,s)K(x-y,t-s)-v(y,s)\frac{\partial K}{\partial n}(x-y,t-s)}.
\end{aligned}
\end{equation}
The minus sign in $I_2(x,t)$ is because we defined $n$ as the outward pointing unit normal to $\Omega$. 
The goal is now to show that $I_1(x,t;r) \to 0$ as $r \to \infty$, leaving us with only $I_2(x,t)$ which gives the desired result \eqref{eqn:outer_brep}. We rewrite $I_1(x,t;r)$ using 
\[ 
 \frac{\partial K}{\partial n}(x,t) = K(x,t) \Big(\frac{-2x}{4kt} \cdot n\Big),
\]
and switching the convolutions in time to get
\begin{equation}\label{eq.Green}
 I_1(x,t;r) = \int_0^t ds \int_{S(0,r)} dS(y) \Mb{\frac{\partial v}{\partial n}(y,t-s)+\bigg(\frac{2y}{4k(t-s)} \cdot n\bigg)v(y,t-s) } K(x-y,s).
\end{equation}
We define $\xi = x/|x|$ ($x \neq 0$ since $x\notin \overline{\Omega}$). Thus, for $y \in S(0,r)$, we can bound the heat kernel by
\begin{equation}
\label{eq.bound}
 K(x-y,s) \leq K(x - r\xi,s),
\end{equation}
because $|x-y|\geq |x-r\xi|$ holds for $|y| = r$. Noticing that we also have $y \cdot n =r$ for $|y|=r$, we can use \cref{cond:rad1} to bound $I_1(x,t;r)$ for sufficiently large $r$. 
Thus, using \eqref{eq.Green}, the bound \eqref{eq.bound} and applying \cref{cond:rad1} leads to
\begin{equation}
|I_1(x,t;r)| \leq  C r^m e^{a \, r^b} A_d(r) \, \int_0^t ds\, \frac{1}{(4\pi ks)^{d/2}} e^{-|x-r\xi|^2/4ks},
\label{eq:bound1}
\end{equation}
where $A_d(r)$ is the surface of a sphere of radius $r$ in $d$ dimensions, which is given in terms of the Gamma function (see e.g. \cite[eq. 5.2.1]{NIST:DLMF}) by
\[
 A_d(r)=\frac{2\pi^{d/2}}{\Gamma(d/2)}r^{d-1}.
\]

Now using the change of variables $u = |x-r\xi|^2/4ks$ on the integral appearing in the right hand side of \eqref{eq:bound1} yields:
\begin{equation}
\label{eq:incomplete}
\begin{aligned}
\int_0^t ds[s^{-d/2} e^{-|x-r\xi|^2/4ks}]&=\int_{+\infty}^{|x-r\xi|^2/4kt}du\Mb{\frac{(4ku)^{d/2}}{|x-r\xi|^d} \left(\frac{-|x-r\xi|^2}{4ku^2}\right) e^{-u} } \\
&=\frac{(4k)^{d/2-1}}{|x-r\xi|^{d-2}}\int^{+\infty}_{|x-r\xi|^2/4kt}du[u^{d/2-1-1}e^{-u}] \\
& = \frac{(4k)^{d/2-1}}{|x-r\xi|^{d-2}}\Gamma\bigg(\frac{d}{2}-1,\frac{|x-r\xi|^2}{4kt}\bigg), 
\end{aligned}
\end{equation}
where the upper incomplete Gamma function $\Gamma(d/2-1,\cdot)$ is defined for all $y>0$ by:
\begin{equation}\label{eq:defincmpleteGamma}
\Gamma\Big(\frac{d}{2}-1,y\Big) =\int_{y}^{+\infty} du[u^{d/2-1-1}e^{-u}].
\end{equation}
In our case $y=|x-r\xi|^{d-2}/(4kt) \to + \infty$ as $r\to+\infty$. Thus, we need an equivalent of $\Gamma(d/2-1,y)$  as $y\to +\infty$. To this aim, we do an integration by parts on \eqref{eq:defincmpleteGamma} to get :
\begin{equation}\label{eq:integbypart}
\Gamma\Big(\frac{d}{2}-1,y\Big) =e^{-y} y^{d/2-2}+ (d/2-2) \int_{y}^{+\infty} du [u^{d/2-1-1} u^{-1} e^{-u}]. 
\end{equation}
Then, as $u\geq y$, one observes that
$$
\Big| \int_{y}^{+\infty} du [u^{d/2-1-1} u^{-1} e^{-u}] \Big| \leq \frac{1}{y} \Gamma\Big(\frac{d}{2}-1,y\Big)
$$
and thus concludes from \eqref{eq:integbypart} that:
\begin{equation}\label{eq:equivalent}
\Gamma\Big(\frac{d}{2}-1,y\Big)= e^{-y} y^{d/2-2}(1 + o(1)),~~\text{as}~y\to+\infty.
\end{equation}

Combining  \eqref{eq:bound1}, \eqref{eq:defincmpleteGamma}  and the equivalent of the incomplete Gamma function \eqref{eq:equivalent} for $y=|x-r\xi|^{d-2}/(4kt)$  gives that for $r$ large enough:
\begin{eqnarray}
|I_1(x,t;r)|& \leq& 2 C \, r^m e^{a r^b} A_d(r) \frac{(4k)^{d/2-1}}{|x-r\xi|^{d-2}} \Big(\frac{|x-r\xi|}{4kt} \Big)^{d/2-2}  e^{-|x-r\xi|^2/4kt} \nonumber \\
&\leq &\widetilde{C}_{x,t}\, r^{m+d/2-1} e^{-|x-r\xi|^2/4kt+ a r^b},
\label{eq:bound3}
\end{eqnarray}
where $\widetilde{C}_{x,t}$ is positive constant that depends only $x$ and $t$ that are here fixed.
To conclude, observe that the upper bound in \eqref{eq:bound3} goes to 0 as $r \to \infty$ since $b<2$. This statement holds for any $x$ outside of $\overline{\Omega}$ and any $t>0$, yielding the representation \eqref{eqn:outer_brep}. 
\end{proof}
\begin{remark}\label{regularity}We point out that the regularity assumption  $v\in C^2\big((\real^d\setminus \Omega)\times [0,+\infty))$ of the solution in \cref{thm:rad_cond} can be relaxed. Indeed, our proof still holds with weaker regularity assumptions but for a smooth bounded open set $\Omega$ (i.e. with a $C^{\infty}$ boundary $\partial \Omega$). For instance, our proof works under a  Sobolev local regularity, namely if the solution $v$ (in the sense of distributions) belongs to $ H^{2,1}(\mathcal{O}\times (0,T)) $ for any $T>0$ and any  open bounded set $\mathcal{O}\subset\real^d\setminus{\overline{\Omega}}$ (we refer to \cite{Costabel:1990:BIO,Lions:1972:PDE} for the definition of $H^{2,1}$). 
This assumption and the zero initial condition of $v$ allows us to apply the representation formula \eqref{eqn:irep:out}  in the proof for $\mathcal{O}=B(0,r)\setminus \overline{\Omega}$ by applying the theorem 2.20 of \cite{Costabel:1990:BIO}. In this setting the integrals in \eqref{eqn:irep:out}  have to be interpreted as duality pairings between Sobolev spaces of the boundary $\partial\Omega$ (see \cite{Costabel:1990:BIO} for more details).
By the trace theorem 2.1 page 9 in \cite{Lions:1972:PDE}, the assumed local Sobolev regularity ensures that $v$ and $\partial v/\partial n$ belong to  $L^2(\partial \Omega \times (0,t))$ and $L^2(\partial B_r\times (0,t))$ for $t>0$ and $r$ sufficiently large. Thus, the integrals $I_1(x,t;r)$ and $I_2(x,t;r)$ can be interpreted not only as a duality paring but as  integrals.
Furthermore, by interior regularity (and even hypoellipticity) of the differential operator in the heat equation (see  \cite{Lions:1973:PDE} theorem 1.1 page 192), one has 
$v \in C^{\infty}(\real^d\setminus \overline{\Omega})\times (0,+\infty)$. Thus, as $v$ is smooth on this set, the growth \cref{cond:rad1} is still well-defined.  The proof of \cref{thm:rad_cond} follows similarly and yields the representation formula \eqref{eqn:outer_brep} in this new setting.
\end{remark}



The heat kernel and its spatial derivatives (which all solve the heat equation) satisfy the growth \cref{cond:rad1} as we see next.
\begin{lemma}
\label{lem:heatkernel}
In dimension $d \geq 1$, the heat kernel and all its spatial derivatives  satisfy the growth condition \eqref{eqn:rad_con1} for some $C>0$ and any $a\geq 0$, $b \in [0,2)$, $r_0>1$ and non-negative integer $m$.
\end{lemma}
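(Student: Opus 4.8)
The plan is to exploit the explicit Gaussian form of $K$ and of its spatial derivatives, so that the quantity controlled by \cref{cond:rad1} is seen to be a polynomial in a dimensionless variable times a Gaussian, hence bounded. First I would record the Hermite-type identity $\partial_x^\alpha K(x,t) = (4kt)^{-|\alpha|/2}\,\mathbf{H}_\alpha\!\big(x/\sqrt{4kt}\big)\,K(x,t)$, valid for every multi-index $\alpha$, where $\mathbf{H}_\alpha$ is a polynomial of degree $|\alpha|$ in $d$ variables (a tensor product of one-dimensional Hermite polynomials, up to sign). This is proved by induction on $|\alpha|$: the case $\alpha=0$ is trivial with $\mathbf{H}_0\equiv 1$, and since $\partial_{x_j}K=-(x_j/2kt)K=-\big(2z_j/\sqrt{4kt}\big)K$ with $z=x/\sqrt{4kt}$, differentiating the identity once more gives the recursion $\mathbf{H}_{\alpha+e_j}(z)=\partial_{z_j}\mathbf{H}_\alpha(z)-2z_j\mathbf{H}_\alpha(z)$, which raises the degree by one.

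Next I would introduce, for $|x|=r>0$ and $t>0$, the dimensionless variable $\rho:=r^2/(4kt)\in(0,+\infty)$, so that $\sqrt{4kt}=r/\sqrt{\rho}$, $\;2r/(4kt)=2\rho/r$, $1/t=4k\rho/r^2$ and $K(x,t)=\pi^{-d/2}\rho^{d/2}r^{-d}e^{-\rho}$. Writing the quantity from \cref{cond:rad1} as $G_\alpha(r\xi,t):=\tfrac{\partial}{\partial n}(\partial_x^\alpha K)(r\xi,t)+\tfrac{2r}{4kt}(\partial_x^\alpha K)(r\xi,t)$, using that $\tfrac{\partial}{\partial n}=\xi\cdot\nabla_x$ so that $\tfrac{\partial}{\partial n}(\partial_x^\alpha K)=\sum_j\xi_j\,\partial_x^{\alpha+e_j}K$, and substituting the Hermite identity together with the formulas above, one finds that $G_\alpha(r\xi,t)$ equals $r^{-(d+|\alpha|+1)}$ times a finite sum of terms of the form $\rho^{p}\,\mathbf{H}_\beta(\sqrt{\rho}\,\xi)\,e^{-\rho}$ with $p\ge 0$ and $|\beta|\le|\alpha|+1$. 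Since $|\xi|=1$, each such term is bounded in modulus by $\rho^{p}\,\widetilde{P}_\beta(\sqrt{\rho})\,e^{-\rho}$ for a one-variable polynomial $\widetilde{P}_\beta$ with nonnegative coefficients independent of $\xi$, and the map $\rho\mapsto\rho^{p}\,\widetilde{P}_\beta(\sqrt{\rho})\,e^{-\rho}$ is continuous on $[0,+\infty)$ and tends to $0$ as $\rho\to+\infty$, hence is bounded. This produces a constant $C>0$ (depending on $\alpha$ and $d$) with $|G_\alpha(r\xi,t)|\le C\,r^{-(d+|\alpha|+1)}$ for all $r\ge 1$, all $t>0$ and all $\xi\in S^d(0,1)$.

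To conclude, for $r\ge 1$ we have $C\,r^{-(d+|\alpha|+1)}\le C\le C\,r^{m}e^{a r^{b}}$ for every non-negative integer $m$, every $a\ge 0$ and every $b\in[0,2)$; choosing any $r_0>1$ then gives exactly \eqref{eqn:rad_con1} with this single constant $C$ for all admissible $m,a,b,r_0$, which is the assertion of the lemma, and the argument works already for $d\ge 1$. As a consistency check, for $\alpha=0$ the two terms of $G_0$ cancel identically, since $\partial K/\partial n(r\xi,t)=-(2r/4kt)K(r\xi,t)$.

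The only real difficulty is bookkeeping: tracking the exact powers of $r$ and of $\rho$ generated by each differentiation and by the substitution $1/t=4k\rho/r^2$, and isolating the $\xi$-dependence so that the bound is uniform over the sphere (this is the purpose of passing from $\mathbf{H}_\beta(\sqrt{\rho}\,\xi)$ to the $\xi$-independent majorant $\widetilde{P}_\beta(\sqrt{\rho})$). The analytic input is entirely elementary --- a polynomial times a Gaussian is bounded --- and, notably, one never needs a separate estimate on $|\partial_x^\alpha K|$ itself, because the heat kernel and all its spatial derivatives in fact decay, rather than grow, as $r\to\infty$.
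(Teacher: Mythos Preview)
Your proof is correct and follows essentially the same strategy as the paper: both arguments observe that spatial derivatives of $K$ are a polynomial (in suitable scaled variables) times $K$ itself, and then use that a polynomial times a Gaussian in the dimensionless variable $\rho=|x|^2/(4kt)$ is uniformly bounded. Your Hermite parametrization $\partial_x^\alpha K=(4kt)^{-|\alpha|/2}\mathbf{H}_\alpha(x/\sqrt{4kt})K$ is a slightly cleaner bookkeeping device than the paper's choice of variables $(x_1/t,\ldots,x_d/t,1/t)$ and even yields the sharper decay $|G_\alpha|\le C\,r^{-(d+|\alpha|+1)}$ rather than mere boundedness, but the underlying idea is the same.
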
 
\begin{proof}
We first introduce the notation $\partial_x^\alpha \psi$ for arbitrary spatial derivatives of a smooth function $(x,t) \mapsto \psi(x,t)$ on $\real^d \times (0,\infty)$:
\[
 \partial_{x}^\alpha \psi(x,t)=\partial_{x_1}^{\alpha_1}\,  \partial_{x_2}^{\alpha_2} \ldots \, \partial_{x_d}^{\alpha_d} \psi(x_1,x_2,\ldots,x_d,t), \ \mbox{ where } x=(x_1,x_2,\ldots,x_d).
\]
Here $\alpha = (\alpha_1,\ldots,\alpha_d)$ is a multi-index, where $\alpha_i$ is the order of differentiation in $x_i$. 

Since the heat kernel $K$ is in $C^{\infty}(\real^d\times (0,+\infty))$, an induction on the degree of differentiation reveals that for $(x,t)\in \Omega\times (0,+\infty)$, $K$ and any of its spatial derivatives have the form,
\begin{equation}\label{eq.deriv}
\partial_x^\alpha K(x,t) = 
P(x_1/t, x_2/t, \ldots,x_d/t,1/t) \, K(x,t), 
\end{equation} 
where $P$ is a multivariate polynomial. In the particular case $\alpha=(0,0,\ldots,0)$, we have $\partial_\alpha K=K$ and thus $P=1$. 
Using the triangle inequality on the expression \eqref{eq.deriv} of $\partial_x^\alpha K$ leads to:
\begin{equation}\label{eq.tringineq}
|\partial_x^\alpha K(x,t)|\leq Q(|x|/t, |x|/t, \ldots,|x|/t,1/t) \, |K(x,t)| \quad  \mbox{ for } (x,t)\in \real^d \times (0,\infty) ,
\end{equation}
where $Q$ is a polynomial which has the same monomial terms as $P$, but whose coefficients are given by the modulus of the coefficients of $P$.

Let $r_0\geq 1$ and $|x|=r>r_0$. We set $u=|x|^2/t$ in the right hand of side of \eqref{eq.tringineq}. As  $|x|/t\leq |x|^2/t=u$ (since $1\leq |x|\leq |x|^2$), $1/t=u/|x|^2\leq u/r_0^2$ (since $|x|>r_0$) and the coefficients of $Q$ are positive, it follows from \eqref{eq.tringineq} and the expression \eqref{eqn:fundamental_solution} of $K$ that:
\begin{equation}\label{eq:bounderiv1}
|\partial_x^\alpha K(x,t)|\leq Q(u, u, \ldots,u,u/r_0^2) \,(4\, k\pi)^{-d/2} (u/r_0^2)^{d/2} e^{-u/(4k)}  \quad \mbox{ for  $|x|>r_0$ and $t$>0}.
\end{equation}
As $Q$ is a polynomial, due to the exponential term, the right hand side of \eqref{eq:bounderiv1} is clearly bounded for $u>0$, thus there is a constant $C_{1,\alpha}>0$ (depending only on $\alpha$, $d$ and $r_0$) such that:
\begin{equation}\label{eq:bounderiv2}
|\partial_x^\alpha K(x,t)|\leq C_{1,\alpha} \mbox{ for  $|x|>r_0$ and $t$>0} .
\end{equation}
Now, as the bound \eqref{eq:bounderiv1} holds for any $\alpha$, one immediately deduces that there is a $C_{2,\alpha}$ such that
\begin{equation}\label{eq:bounnormalderiv}
| \nabla(\partial_x^{\alpha} K(r\xi,t)) \cdot n |\leq |\nabla(\partial_x^{\alpha} K(r\xi,t))| \leq C_{2,\alpha} \ \mbox{ for $x=r\xi$, $\xi \in S^d(0,1)$,  $r>r_0$ and $t>0$.}
\end{equation}
Thus, by \eqref{eq:bounnormalderiv}, \eqref{eq:bounderiv1} and the bound $r/t\leq r^2/t=u$ (as $r> r_0 \geq  1$), there is a $C_{3,\alpha}>0$ such that:
\begin{equation}
    \begin{aligned}
        \Big|\nabla(\partial_\alpha K(r\xi,t)) \cdot n +\frac{r}{2kt} \partial_\alpha K(r\xi,t)\Big| &\leq  \Big|\nabla(\partial_\alpha K(r\xi,t)) \cdot n\Big| +\frac{u}{2k}\Big| \partial_\alpha K(r\xi,t)\Big| \\
        &\leq C_{2,\alpha}+\frac{(2k)^{-1}}{(4\pi k r_0^2)^{d/2}}\,  Q\Big(u, u, \ldots,u,\frac{u}{r_0^2}\Big) \, u^{d/2+1} e^{-u/(4k)}
        \\
        &\leq  C_{3,\alpha},
    \end{aligned}
    \label{eqn:hk_bigt}
\end{equation}
for any $r>r_0$, $\xi\in S^d(0,1)$ and $t>0$.
\end{proof}

\begin{remark}\label{rem.maximunprincpunboundeddomain}
As in \cref{rem.maximunprincpboundeddomain} for bounded $\Omega$, the representation formula \eqref{eqn:outer_brep} satisfies some stability due to the maximum principle. However, as $\real^d\setminus \overline{\Omega}$ is unbounded, it requires a bound that controls the growth of the functions when $|x|\to+ \infty $, namely, one assumes that there exist $A,a>0$ such that:
\begin{equation}\label{eq.bounduniqueness}
|u(x,t)|\leq A e^{a|x|^2}, \mbox{ for }(x,t)\in (\real^d\setminus \overline{ \Omega}) \times (0,T],
\end{equation}
for some finite $T>0$. This last condition allows Gaussian growth and is similar to \cref{cond:rad1}.

More precisely, let $v_{j}\in C^{0}\big((\real^d\setminus \Omega)\times [0,T]\big)$ for $j=1,2$ be two solutions  of the homogeneous heat equation in $(\real^d\setminus \overline{\Omega})\times (0,T)$ in the distributional sense that satisfy the growth condition \eqref{eq.bounduniqueness}. Again by hypoellipticity (see \cite{Lions:1973:PDE} theorem 1.1 page 192),  $u_j$ is indeed a smooth solution of the homogeneous heat equation on  
$(\real^d \setminus \overline{\Omega})\times (0,T)$ for $j=1,2$ since it is $C^{\infty}$ on this set.
Then by the maximum principle (see e.g. \cite[chapter 3, section 3, theorem 6]{Evans:2010:PDE}) one has:
\begin{equation}\label{eq.maxprincipunbounded}
\sup_{(\real^d\setminus \Omega) \times [0,T]}|v_2(x,t)-v_1(x,t)|=\sup_{\big((\real^d \setminus \Omega) \times \{0 \}\big)\cup \big(\partial \Omega \times [0,T]\big)}|v_2(x,t)-v_1(x,t)|.
\end{equation}
Note that the proof of \cite[chapter 3, section 3, theorem 6]{Evans:2010:PDE} is done on all of $\real^d$, but can be adapted to  $\real^d\setminus \overline{\Omega}$ with $\Omega$ a bounded Lipschitz domain. Furthermore if the initial condition is harmonic and decays to $0$ as $|x|\to +\infty$, using the maximum principle for the Laplace equation in unbounded domains,  one can simplify \eqref{eq.maxprincipunbounded} to include only surface terms in the right hand side
\begin{equation}\label{eq.maxprincipunbounded2}
\max_{(\real^d\setminus \Omega ) \times [0,T]}|v_2(x,t)-v_1(x,t)|=\max_{\partial \Omega \times [0,T]}|v_2(x,t)-v_1(x,t)|.
\end{equation}
Thus, in  the space of solutions of the homogeneous heat equation (that satisfy \eqref{eq.bounduniqueness} and the regularity described above),
both \eqref{eq.maxprincipunbounded} and \eqref{eq.maxprincipunbounded2} tell us that  an error committed on the initial condition or on the Dirichlet boundary data controls the reconstruction error of $v_{\Omega}$ on $(\real^d\setminus \overline{\Omega}) \times (0,T]$, in the supremum norm. Furthermore, uniqueness on $(\real^d\setminus\Omega)\times [0,T]$ for the heat equation exterior Dirichlet problem  follows from the maximum principle equality \eqref{eq.maxprincipunbounded}, provided the growth condition \eqref{eq.bounduniqueness} and regularity assumptions hold. Moreover if  \eqref{eq.bounduniqueness} is satisfied for any $t>0$, this uniqueness result extends to $(\real^d\setminus  \Omega) \times [0,+\infty)$, assuming the same regularity assumptions but with an infinite time.

Finally, as in  \cref{rem.maximunprincpboundeddomain},  
under the additional assumption that the open set  $\real^d\setminus \overline{\Omega}$ is connected, if a maximum is attained in \eqref{eq.maxprincipunbounded} at $(x_0,t_0) \in (\real^d\setminus \overline{\Omega}) \times (0,T]$  then there exists a real constant $C$ such that $v_2(x,t) - v_1(x,t)=C$  on $(\real^d\setminus \Omega) \times [0,t_0]$. Furthermore, one shows that if one considers the formula \eqref{eq.maxprincipunbounded2}, this last  property holds also for $t_0=0$ and the constant $C$ has to be zero (since in formula \eqref{eq.maxprincipunbounded2}, one assumes that the initial conditions decay to $0$ when $|x|\to +\infty$ which imposes that $C=0$).
\end{remark}

A numerical example of the exterior reproduction of a field can be seen in \cref{fig:ext_greens}. The details of the example are the same as those in \cref{fig:int_greens}, except the point source has been moved to $(0.5,0.55)$.

\begin{figure}
    \centering
    \begin{tabular}{ccc}
        \includegraphics[width=.3\textwidth]{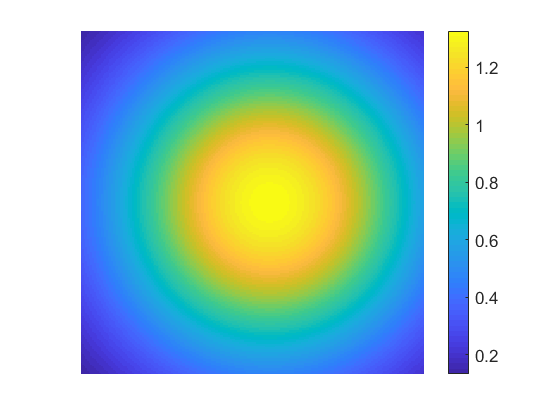} & \includegraphics[width=.3\textwidth]{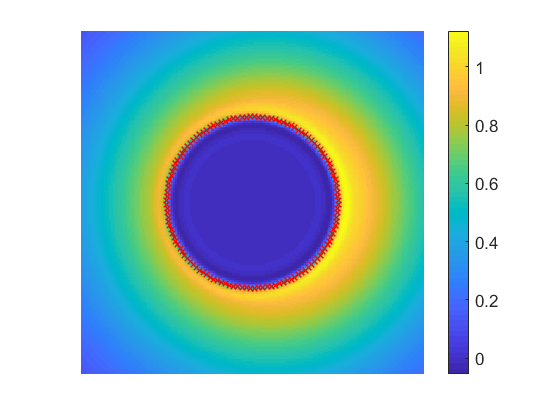} &
        \includegraphics[width=.3\textwidth]{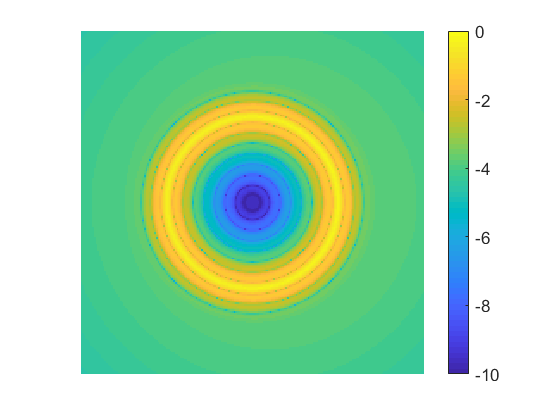} \\
        {\captionfont(a) Original field} & {\captionfont(b) Reproduced field} & {\captionfont(c) $\log_{10}$ plot of errors}
    \end{tabular}
    \caption{Numerical example of the exterior field reproduction problem. The details of this example are the same as \cref{fig:int_greens}, but with the point source moved to $(0.5,0.55)$.  We generated a plot similar to (c) by taking the maximum over the time interval $[0.2,0.3]$ for each grid point (the plot being very similar to (c), we include code to generate it as supplementary material).  This indicates that the maximum error is attained near $\partial\Omega$ in space (and in time at $t=0.2$), conforming to the maximum principle (\cref{rem.maximunprincpboundeddomain,rem.maximunprincpunboundeddomain}).
    We can use here the maximum principle on the computed error because the numerical methods we use generates solutions to the heat equation (see \cref{sec:numerics}). }
    \label{fig:ext_greens}
\end{figure}

\subsection{Numerical sensitivity study of field reproduction} 
\label{sec:irheq:num}

\begin{figure}
\centering
\includegraphics[width=36mm]{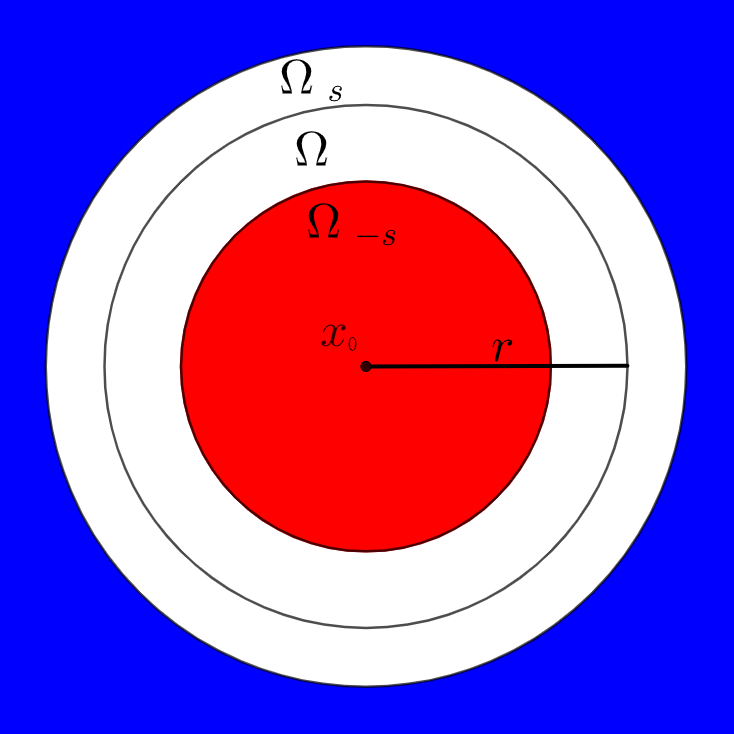}
\caption{We evaluate the field reproduction errors on regions that exclude a neighborhood of the boundary $\partial\Omega$. For the numerical experiments we took $\Omega = B(x_0,r) \subset [0,1]^2$. The interior reproduction error is evaluated on $\Omega_{-s} = B(x_0,(1-s)r)$ (in red) while the exterior one is evaluated on $[0,1]^2 - \Omega_s$ (in blue), where $\Omega_s = B(x_0,(1+s)r)$.} 
\label{fig:cart_err}
\end{figure}

We study the sensitivity of the numerical approximation of the boundary representation formulas  \eqref{eqn:brep} and \eqref{eqn:outer_brep} to the following factors: (a) the spatial discretization (number of points on $\partial \Omega$), (b) the temporal discretization (number of time steps) and (c) errors in the densities appearing in the boundary representation formulas. As can be seen in \cref{fig:int_greens}(c) and \cref{fig:ext_greens}(c), the reproduction error peaks close to the boundary, so we decided to exclude a neighborhood of $\partial\Omega$ from the error measures we present. The numerical approximation of the boundary reproduction formulas is explained in detail in \cref{sec:numerics}.
Here we keep the same domain $\Omega$ as in the examples of \cref{sec:irheq:int,sec:irheq:ext}. The boundary integral representations were used to approximate the field on a $100 \times 100$ uniform grid of $[0,1]^2$ and the thermal diffusivity was taken to be $k=0.2$.

The first case we consider is that of the interior reproduction problem, i.e., when the source distribution is supported in $\real^2-\Omega$. We expect using \eqref{eqn:brep} that $u_\Omega = u$ inside $\Omega$ and $u_\Omega = 0$ outside $\overline{\Omega}$. To evaluate the quality of the numerical approximation we make, we calculate the relative reproduction error on a slightly smaller domain $\Omega_{-s} = B(x_0,(1-s)r)$, 
\[
 \text{relerr}_-(u;t) = \frac{\| u(\cdot,t)-u_\Omega(\cdot,t) \|_{L^2(\Omega_{-s})}}{\| u(\cdot,t) \|_{L^2(\Omega_{-s})}},
\]
where we used the $L^2$ norm of a function over some set $R$, namely
\[
 \| f \|_{L^2(R)} = \Big( \int_R |f(x)|^2 dx \Big)^{1/2}.
\]
We also calculate the absolute error outside of a slightly larger domain $\Omega_s = B(x_0,(1+s)r)$, i.e. 
\[
 \text{err}_+(u;t) =\|u_\Omega(\cdot,t) \|_{L^2([0,1]^2-\Omega_s)}.
\]
The $L^2$ norms appearing in the error quantities that we consider are approximated using Riemann sums on the $100 \times 100$ grid of $[0,1]^2$. The domains of interest, $[0,1]^2-\Omega_{s}$ and $\Omega_{-s}$, are illustrated by the blue and red regions of \cref{fig:cart_err} respectively. In our numerical experiments we chose $s=.05$ to get a buffer annulus at $\pm 5\%$ of $r$. The field $u$ is generated by a delta source $\delta(x,t)$.

Similarly for the exterior reproduction problem, where we want to reproduce a field $v$ satisfying the heat equation with a source term supported in $\Omega$ and satisfying the radiation type boundary condition \eqref{eqn:rad_con1}, we calculate the absolute interior error
\[
 \text{err}_-(v;t) = \| v_\Omega(\cdot,t) \|_{L^2(\Omega_{-s})}
\]
and the relative exterior error
\[
 \text{relerr}_+(v;t) = \frac{\|v(\cdot,t) - v_\Omega(\cdot,t) \|_{L^2([0,1]^2 -\Omega_s)}}{\|v(\cdot,t)\|_{L^2([0,1]^2 -\Omega_s)}} .
\]
For the exterior reproduction studies, the field $u$ is produced by a delta source located at $x=(0.5,0.55)$ and $t=0$.

\subsubsection{Sensitivity to spatial discretization} In \cref{fig:spatial_errors} we illustrate the changes in reproduction error for both the interior (\cref{fig:spatial_errors} first row) and exterior (\cref{fig:spatial_errors} second row) reproduction problems.  For both studies a uniform discretization of $\partial \Omega$ is used and 1000 uniform time steps. While increasing the number of points on $\partial \Omega$ decreases the error in all cases, the decrease from 50 to 100 points is modest. We think this is due to the temporal discretization error being dominant. 

\begin{figure}
    \centering
    \begin{tabular}{ccc}
    & {\captionfont interior error} & {\captionfont exterior error}\\
    \raisebox{2em}{\rotatebox{90}{\captionfont Interior reproduction problem}} &
    \includegraphics[width=55mm]{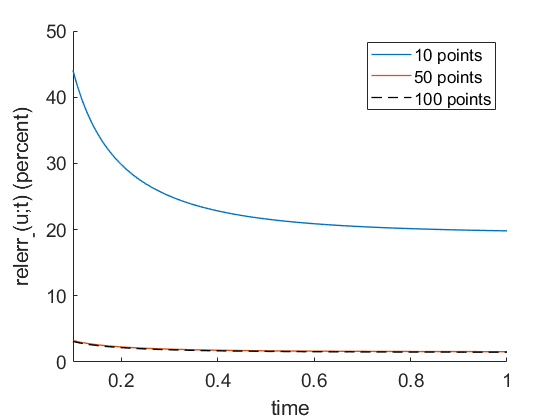} & \includegraphics[width=55mm]{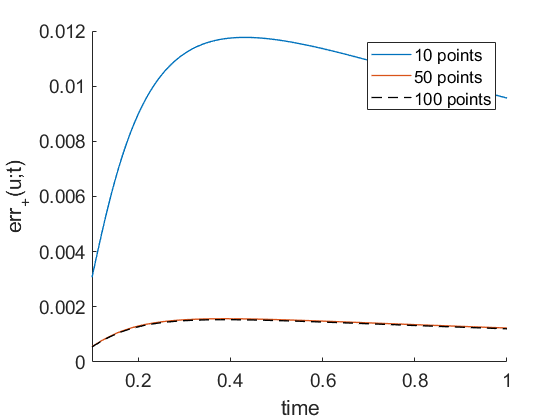} \\
    \raisebox{2em}{\rotatebox{90}{\captionfont Exterior reproduction problem}} &
    \includegraphics[width=55mm]{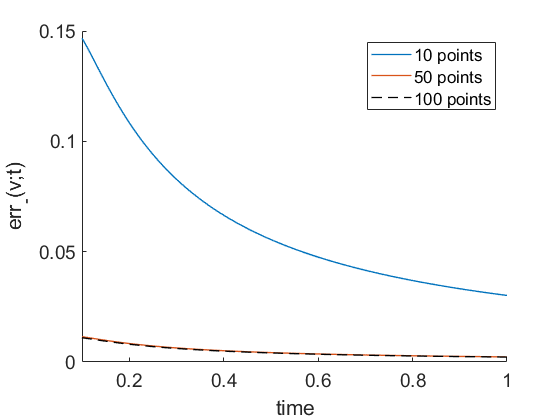} & \includegraphics[width=55mm]{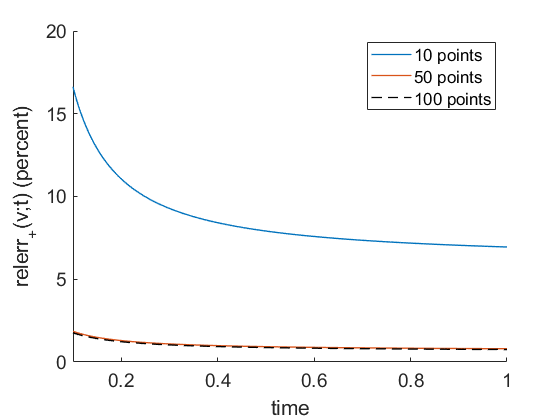} \\
    \end{tabular}
    \caption{Influence of the number of points used to discretize $\partial \Omega$ on the reproduction error for the interior reproduction problem of a point source located at $x_0=(0,0), t=0$ (top row) and for the exterior reproduction problem  of a point source located at $x_0=(0.5,0.55),t=0$ (bottom row), both with thermal diffusivity $k=0.2$. Here $\partial \Omega$ is the circle of radius $0.25$ centered at $(0.5,0.5)$. Since the errors for small times are large, we only show the errors for $t \geq 0.1$.}
    \label{fig:spatial_errors}
\end{figure}

\subsubsection{Sensitivity to temporal discretization} We report in \cref{fig:temporal_errors} the change in reproduction error as we increase the number of time steps while keeping the number of uniformly spaced points used to discretize $\partial \Omega$ fixed and equal to $100$. This is done for both the interior (\cref{fig:temporal_errors} first row) and exterior (\cref{fig:temporal_errors} second row) reproduction problems. For a fixed time  the errors decrease with the number of time steps, as expected.

\begin{figure}
    \centering
    \begin{tabular}{ccc}
         & {\captionfont interior error} & {\captionfont exterior error}\\
        \raisebox{2em}{\rotatebox{90}{\captionfont Interior reproduction problem}} & \includegraphics[width=55mm]{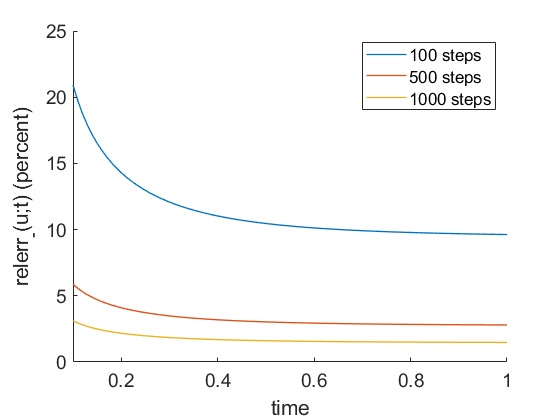} & \includegraphics[width=55mm]{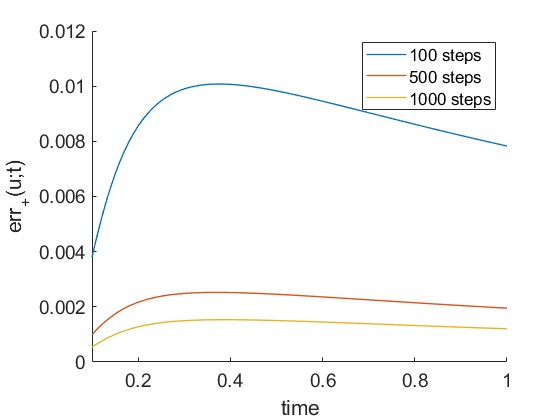} \\
        \raisebox{2em}{\rotatebox{90}{\captionfont Exterior reproduction problem}} &
         \includegraphics[width=55mm]{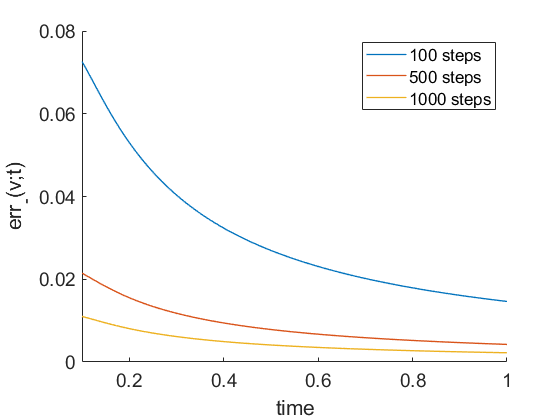} & \includegraphics[width=55mm]{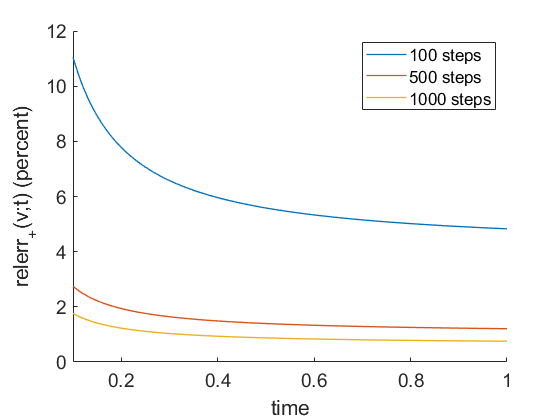} \\
    \end{tabular}
    \caption{Influence of the number of time steps on the reproduction error for the interior reproduction problem of a point source located at $x_0=(0,0), t=0$ (first row) and for the exterior reproduction problem of a point source located at $x_0=(0.5,0.55),t=0$ (second row), both with thermal diffusivity $k=0.2$. Here $\partial \Omega$ is the circle of radius $0.25$ centered at $(0.5,0.5)$. Since the errors for small times are large, we only show the errors for $t \geq 0.1$.}
    \label{fig:temporal_errors}
\end{figure}

\subsubsection{Sensitivity to errors in the densities}
In practice it cannot be assumed that the field to be reproduced is perfectly known so we report in \cref{fig:errors} how the reproduction error is affected by errors in the monopole and dipole densities appearing in \eqref{eqn:brep} and \eqref{eqn:outer_brep} when discretized with 1000 time steps and 100 points on $\partial \Omega$. 

Say $\phi^{(n)} \in \real^{100}$ is a vector representing the values of either the monopole or dipole density in one of the boundary representation formulas at time $n\Delta t$, where $\Delta t$ is the time step. We perturb $\phi^{(n)}$ with a  vector $\delta \phi^{(n)} \in \real^{100}$ with independent identically distributed zero mean Gaussian entries with standard deviation being a fraction (3\%) of $\|\phi^{(n)}\|_2$.  For clarity we only show the error for a single realization of the perturbation $\delta \phi^{(n)}$. The errors we observe in  \cref{fig:errors} oscillate rapidly because we introduced a random perturbation at every single time step. As expected, the error we introduced in the densities increases the overall error at every single time step. We include code to generate the spatial distribution of the reconstruction errors as supplementary material (see {\tt README} file), we observe that the maximum error over a time window is attained  near $\partial\Omega$ as predicted by the maximum principle (\cref{rem.maximunprincpboundeddomain,rem.maximunprincpunboundeddomain}). Since the additive noise comes from perturbing the monopole and dipole densities, this perturbation is also a smooth solution of the heat equation (see \cref{sec:numerics}), which makes the maximum principle applicable.

\begin{figure}
    \centering
    \begin{tabular}{ccc}
        & {\captionfont interior error} & {\captionfont exterior error}\\
       \raisebox{2em}{\rotatebox{90}{\captionfont Interior reproduction problem}} & \includegraphics[width=55mm]{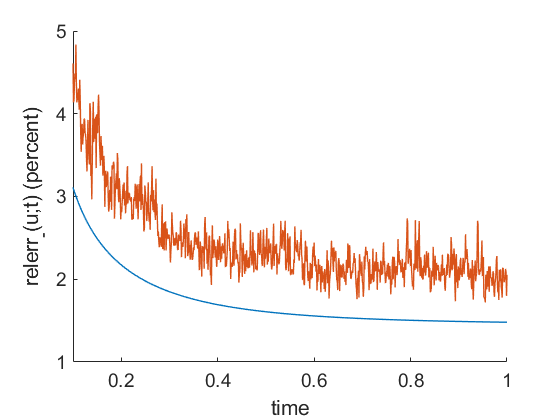} & \includegraphics[width=55mm]{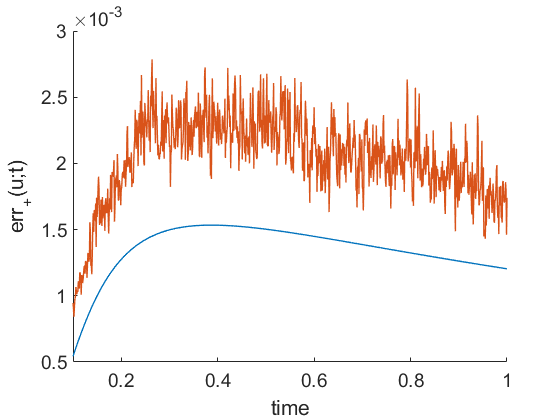}\\
       \raisebox{2em}{\rotatebox{90}{\captionfont Exterior reproduction problem}} &
        \includegraphics[width=55mm]{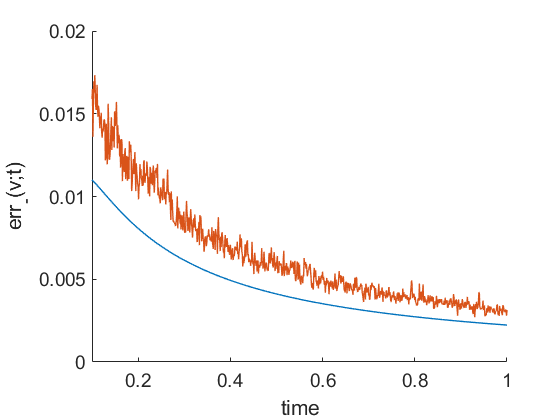} & \includegraphics[width=55mm]{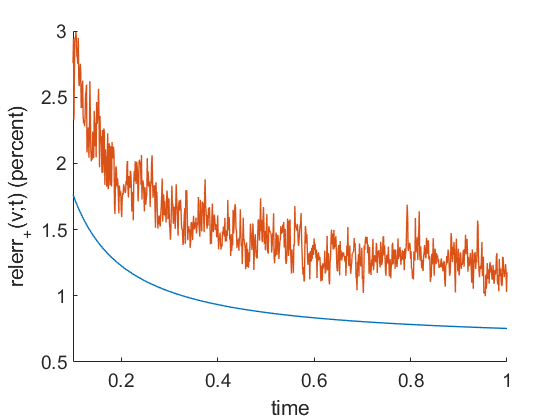} 
    \end{tabular}
    \caption{Influence of random perturbations added to the monopole and dipole densities on the reproduction error for the interior reproduction problem of a point source located at $x_0=(0,0), t=0$ (first row) and for the exterior reproduction problem of a point source located at $x_0=(0.5,0.55),t=0$ (second row), both with thermal diffusivity $k=0.2$. Here $\partial \Omega$ is the circle of radius $0.25$ centered at $(0.5,0.5)$. Since the errors for small times are large, we only show the errors for $t \geq 0.1$.  
    In orange: error with random perturbations. In blue: error obtained with the unperturbed densities.}
    \label{fig:errors}
\end{figure}

\begin{remark}
In the numerical results we present, the exterior errors, relative and absolute, are smaller than the interior errors. We also see a transient effect in the absolute exterior error. We believe this is similar to the temperature distribution near a point source, which also increases and then decreases with time.
Since the errors that we report in \cref{fig:spatial_errors,fig:temporal_errors,fig:errors} are based on the $L^2$ norm, the maximum principle considerations of \cref{rem.maximunprincpboundeddomain,rem.maximunprincpunboundeddomain} do not apply directly to these numerical experiments.
\end{remark}

\section{Cloaking}
\label{sec:cloak}
The goal here is to use the results from  \cref{sec:irheq} to cloak sources or objects inside a cloaked region, by placing sources on the surface of the region. By cloaking we mean that it is hard to detect the object or source from only thermal measurements made outside the cloaked region. The boundary representation formulas of \cref{sec:irheq} give us the appropriate surface source distribution. We start in \cref{sec:cloak:src} with the interior cloaking of a source, directly applying the boundary representation formula in \cref{sec:irheq:ext}. The interior cloaking of an object is illustrated in \cref{sec:cloak:obj} by using the boundary representation formula in \cref{sec:irheq:int}. The boundary representation formulae impose restrictions on what can be cloaked and how. In either case the field must be known for all time and with no sources in the region where it is reproduced. For the interior cloaking of a source, the temperature field generated by this source must also satisfy \cref{cond:rad1}.

\subsection{Cloaking a source in an unbounded domain}
\label{sec:cloak:src}
Given certain kinds of localized heat source distributions, we can find an active surface surrounding the source so that the source cannot be detected by an observer outside the surface. Let $v_i(x,t)$ be a free space solution to the heat equation \eqref{eqn:heat_bis} with zero initial condition and compactly supported source distribution $h(x,t)$. Let  $\Omega$ be an open bounded set (with Lipschitz boundary) that contains the support of the source $h(x,t)$ for $t>0$. In an analogy with wave problems, we call $v_i$ the ``incident field'' and we further assume that it satisfies the growth \cref{cond:rad1}. By \cref{thm:rad_cond}, we can find monopole and dipole densities on $\partial\Omega$ so that the boundary representation formula \eqref{eqn:outer_brep} gives $-v_i$ outside of $\overline{\Omega}$ and $0$ inside $\Omega$. We call this the cloaking field $v_c$ and it is given for $t>0$ by
\begin{equation}
   v_c(x,t) = \begin{cases} 0 &x \in \Omega\\
                    - v_i(x,t) &x \notin \overline{\Omega}.
    \end{cases}
    \label{eqn:ext_cloak}
\end{equation}
In this manner the total field $v_{\text{tot}} = v_i + v_c$ is zero outside of $\overline{\Omega}$ and equal to $v_i$ inside $\Omega$. Because the active surface $\partial \Omega$ perfectly cancels the effect of the source $h(x,t)$ for $x \notin \overline{\Omega}$, the source cannot be detected by an observer. \Cref{fig:ext_greens} shows a numerical example of $v_c$.

\subsection{Cloaking passive objects in an unbounded domain}
\label{sec:cloak:obj}

One way to detect an object in free space using only thermal measurements would be to generate an incident or probing field $u_i(x,t)$ with a source distribution $h(x,t)$, i.e. a solution to the heat equation \eqref{eqn:heat_bis} in free space with zero initial condition and $h$ as its source term. 
In the presence of an object, the total field is given by $u_{\text{tot}} = u_i + u_s$, where $u_s$ is the field ``scattered'' by the object, borrowing terminology from the wave equation. The scattered field is produced by the interaction between the incident field and the object and depends on the properties of the object (boundary condition, heat conductivity, $\ldots$). We point out that $u_s(x,0)=0$ because $u_{\text{tot}}(x,0) = u_i(x,0)$. Having $u_s \neq 0$ reveals the presence of an object. In the following we assume that the object is ``passive'', meaning that the scattered field is linear in the incident field. In particular this means that $u_s = 0$ when $u_i = 0$. Examples of passive objects include objects with homogeneous linear boundary conditions (e.g. Dirichlet, Neumann or Robin) or objects  with a heat conductivity that is different from that of the surrounding medium (see e.g. \cite{Ammari:2005:DATI,Hohage:2005:NSH} for transmission problems for the heat equation).  We point out that the object is assumed to be open with Lipschitz boundary.

The results in \cref{sec:irheq} can be used to cloak a passive object $R$ by placing it inside a cloaking region $\Omega$ (i.e. a bounded open set $\Omega$ with smooth boundary such that $\overline{R}\subset \Omega$) and makes this whole region invisible from probing incident fields $u_i$ generated by a source $h$  spatially supported in $\real^d\setminus \Omega$. Indeed, by controlling monopoles and dipoles on $\partial\Omega$, the region $\Omega$ and the object within can be made indistinguishable from a patch of homogeneous medium, from the perspective of thermal measurements outside of $\overline{\Omega}$. In a similar manner to \cref{sec:cloak:src}, the idea is to use \eqref{eqn:brep} to cancel the incident field in  $\Omega$, while leaving the outside of $\overline{\Omega}$ unperturbed. The cloaking field, $u_c$, produced by this active surface $\partial \Omega$ is then for $t>0$:
\begin{equation}
  u_c(x,t) = \begin{cases} -u_i(x,t) &x \in \Omega,\\
                    0 &x \notin \overline{\Omega}.
    \end{cases}
    \label{eqn:int_cloak}
\end{equation}
In principle, this cloaking field can be used to perfectly cancel the incident field in $\Omega$ for all $t>0.$ Since the temperature of the ``modified incident field'': $u_i+u_c$ is zero in $\Omega$, the temperature field surrounding the object vanishes and no scattered field is produced.  In practice, the field $u_i + u_c$ in the vicinity of the object does not perfectly vanish, but we expect it to be sufficiently close to zero so that the scattered field $u_s$ is very small (because of linearity).

Our technique is illustrated with an object with homogeneous Dirichlet boundary conditions in \cref{fig:contour_cloaking}. Here the field $u_i$ is generated by a point source at $x=(0.9,0.3)$ and $t=0$. For the heat equation we took $k=0.2$ and  the cloaked region is $\Omega = B(x_0,r)$ with $x_0 = (0.5,0.5)$ and $r=1/3$. We computed the fields on the unit square $[0,1]^2$ with a $200\times 200$ uniform grid. The field $u_c$ is found by approximating the integral \eqref{eqn:brep} using the midpoint rule in time with 600 equal length subintervals of $[0,0.5]$  and the trapezoidal rule on $\partial \Omega$ with 128 uniformly spaced points. A more detailed explanation, including how the scattered fields are calculated, is in \cref{sec:numerics}. We represent in figure \cref{fig:contour_cloaking} the total fields respectively generated by the incident field $u_i$ (left column) and  by the "modified incident field": $u_i+u_c$ (right column). As can be seen in the right column, the temperature fields, outside of the cloaked region are indistinguishable from the incident field $u_i$. We do not include a detailed error plot for this configuration, as the error is similar to the one we encountered when studying the interior reproduction problem. 

\begin{remark}
\label{rmk:other}
 Here are two ways of dealing with {\em active objects}, i.e. that are not passive.  First if the object produces a non-zero scattered field $u_s$ when $u_i = 0$, the object acts as a source and $u_s$ can be cancelled using the technique in \cref{sec:cloak:src}. This presumes perfect knowledge of $u_s$ and $u_i$. Second, if the object does not produce a scattered field when immersed in some harmonic field $u_0$, we can use as a cloaking field $u_c(x,t) = -u_i(x,t) + u_0(x)$ for $x\in \Omega$ and $u_c(x,t) = 0$ for $x \notin \overline{\Omega}$, instead of \eqref{eqn:int_cloak}. An example of such an object would be one with a constant $c\neq 0$ Dirichlet boundary condition. By our assumption, the field $u_0(x) = c$ does not create any scattering, regardless of the shape of the object.
\end{remark}

\begin{figure}
    \centering
    \begin{tabular}{cc}
    \includegraphics[width=50mm]{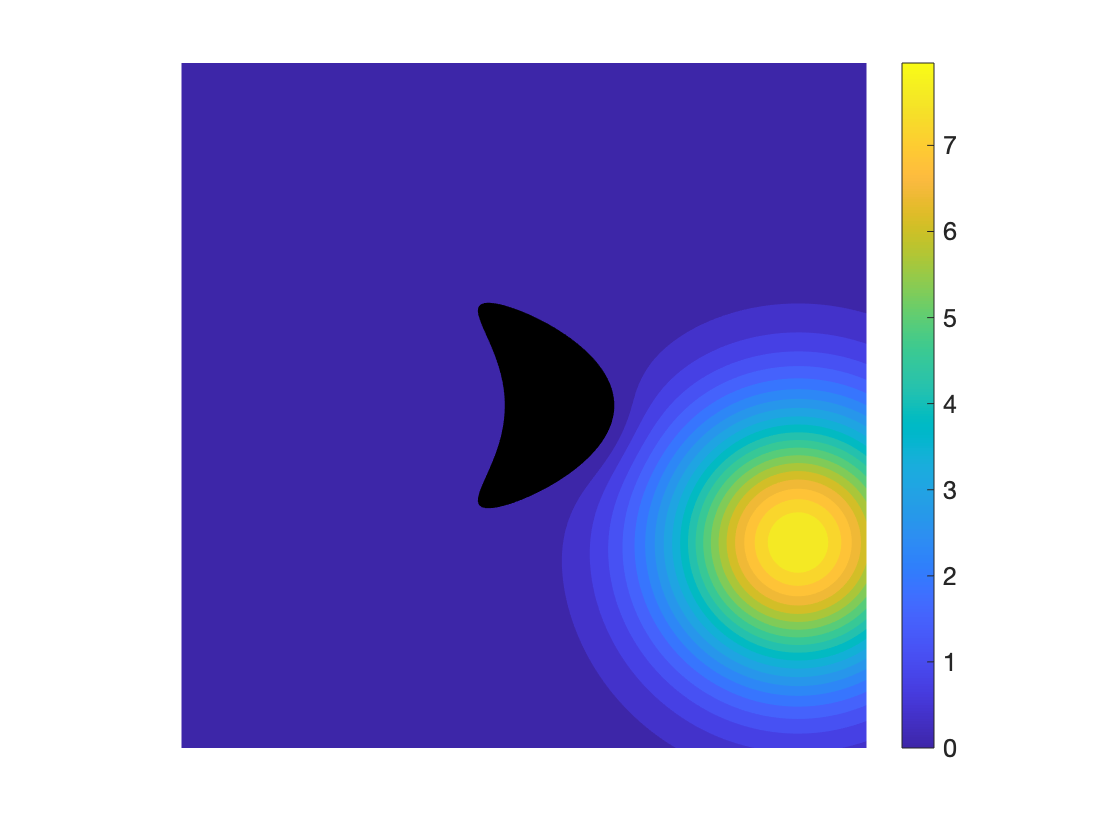} &
    \includegraphics[width=50mm]{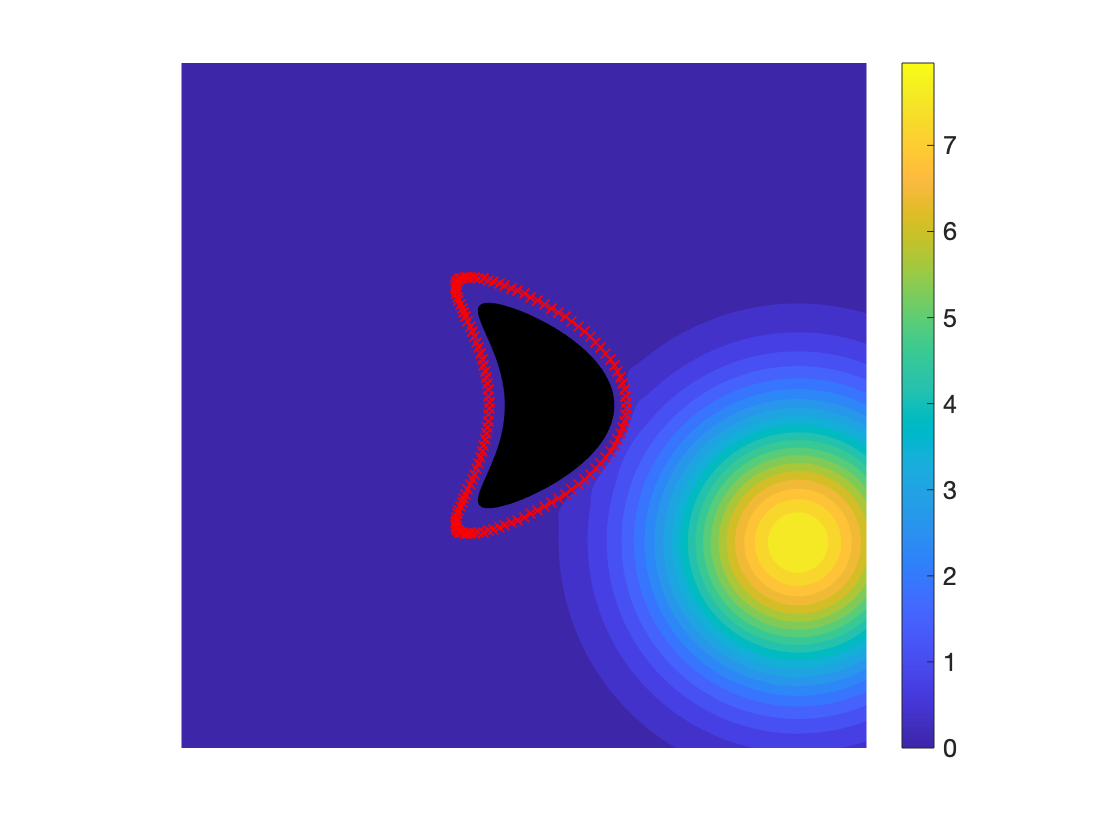} \\
    {\captionfont (a) Uncloaked object at $t=.05$s }&
    {\captionfont (b) Cloaked object at $t=.05$s }\\
    \includegraphics[width=50mm]{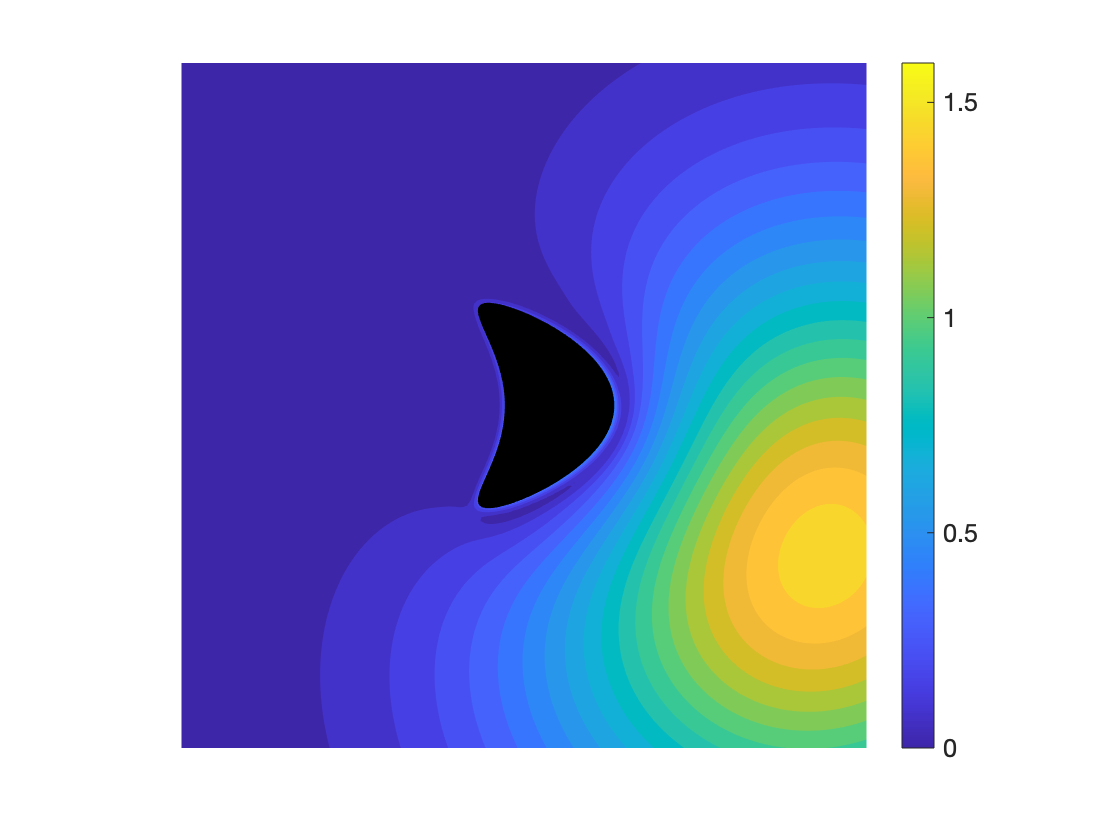} &
    \includegraphics[width=50mm]{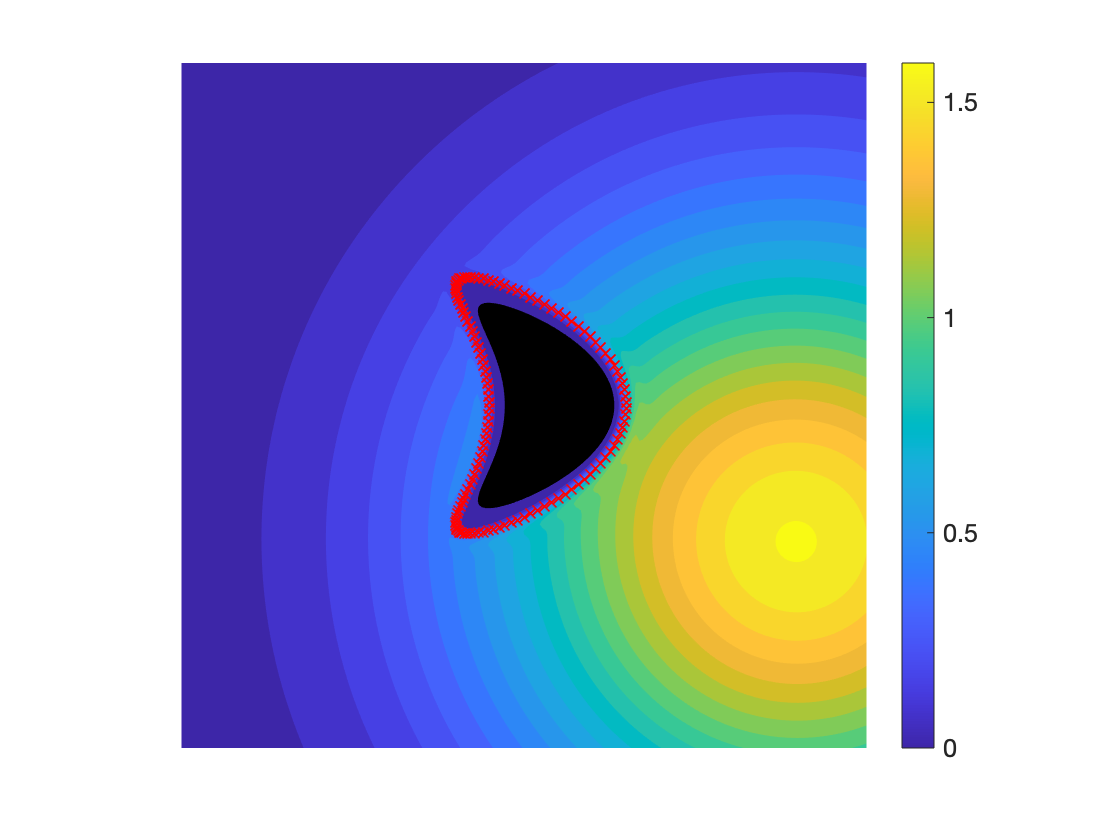} \\
    {\captionfont (c) Uncloaked object at $t=.25$s } &
    {\captionfont (d) Cloaked object at $t=.25$s } \\
    \includegraphics[width=50mm]{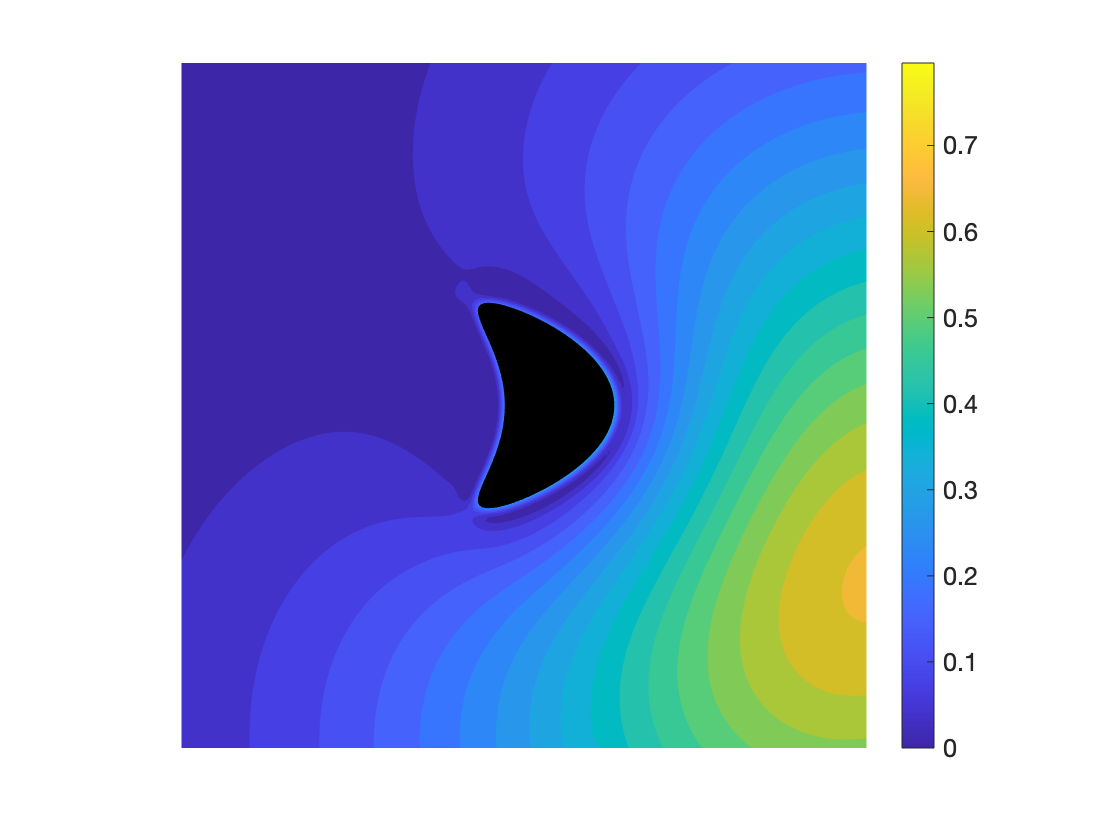} &
    \includegraphics[width=50mm]{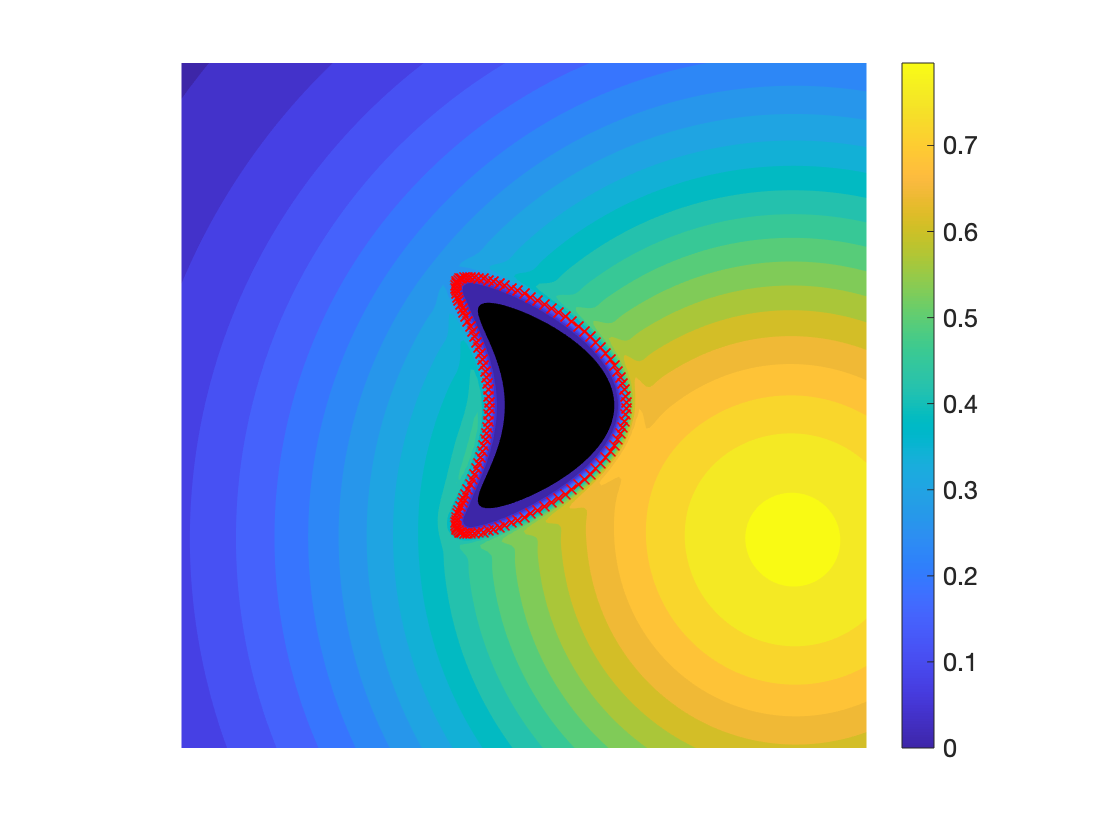} \\
    {\captionfont (e) Uncloaked object at $t=.5$s } &
    {\captionfont (f) Cloaked object at $t=.5$s }
    \end{tabular}
    \caption{Numerical example of the object cloaking problem, with object to hide having homogeneous Dirichlet boundary conditions. The incident field is produced by a point source at $x=(0.9,0.3)$ and $t=0$s. The left images (a), (c) and (e) show time snapshots of the object without the cloak and the right images (b), (d) and (f) the corresponding snapshots when the cloak is active. Placing the cloak close to the object is a challenging simulation because we expect that errors will be highest near the boundary as in \cref{fig:int_greens}. [See also movie in supplementary material]}
    \label{fig:contour_cloaking}
\end{figure}

\section{Mimicking}
\label{sec:mimic}

Another possible application of the boundary representation formulas \eqref{eqn:brep} and \eqref{eqn:outer_brep} is to mimic sources or passive objects. This is done in two steps. First we cancel out the original source or suppress the scattering of the original object using a source distribution on a surface $\partial\Omega$ surrounding the object. Second we adjust the source distribution so that the object or source appears to the observer as another object or source. We illustrate this idea with two cases: making sources look like other sources (\cref{sec:mimic:src}) and making a passive object look like a different passive object (\cref{sec:mimic:obj}). Other combinations are possible but are not presented here.

\subsection{Source mimicking}
\label{sec:mimic:src}
For source mimicking, we consider the problem where there is a compactly supported source distribution, $f(x,t)$ which we seek to make appear as a different compactly supported source distribution, $g(x,t)$ from thermal measurements outside of a region $\overline{\Omega}$ (where $\Omega$ is an open bounded set with Lipschitz boundary). The support of both distributions is assumed to be contained in $\overline{\Omega}$ for all $t>0$. The two corresponding solutions to the heat equation \eqref{eqn:heat_bis} are $v(x,t; f)$ and $v(x,t;g)$, and we further assume they satisfy \cref{cond:rad1}. 

Mimicking can be achieved by simultaneously canceling the field $v(x,t;f)$ outside $\overline{\Omega}$ and adding $v(x,t;g)$ also outside $\overline{\Omega}$. Both can be done using the results in \cref{sec:cloak:src}. That is, using \eqref{eqn:ext_cloak} we can find a monopole and dipole density on $\partial\Omega$ that generates a field
\begin{equation}\label{eq.cloakfield}
    v_c(x,t) = \begin{cases} 0 &x \in \Omega,\\
                    v(x,t;g) - v(x,t;f) &x \notin \overline{\Omega}.
    \end{cases}
\end{equation}
In this way the field $v(x,t;f) + v_c(x,t)$ is equal to $v(x,t;g)$ outside of $\overline{\Omega}$, as desired.

A numerical example to illustrate the method is given in \cref{fig:src_mimic_fields_cont}. Fields are calculated in $[0,1]^2$ using a uniform grid of 200 by 200 points at $t=0.2$ s.  Here a point source at $y^{(1)}=(0.6,0.4)$ and $t=0$ is made to appear as a point source at $y^{(2)}=(0.39,0.6)$ and $t=0$, from thermal measurements outside of $\overline{\Omega}$. 
\Cref{fig:src_mimic_fields_cont} (a) and (b) represent the fields $v(x,t;f)$ and $v(x,t;g)$, where $f(x,t)=\delta(x-y^{(1)},t)$ and $g(x,t)=\delta(x-y^{(2)},t)$. \Cref{fig:src_mimic_fields_cont}(c) shows the field $v(x,t;f)+ v_c(x,t)$, where $v_c$ has been constructed by applying \eqref{eq.cloakfield}.
An error plot is shown in \cref{fig:src_mimic_fields_cont}(d) where the error is largest near the boundary as we expect based on the sensitivity analysis of \cref{sec:irheq}. We believe the diagonal line with small error in \cref{fig:src_mimic_fields_cont}(d) is an artifact of our choice of sources. 

\begin{figure}
    \centering
    \begin{tabular}{cc}
        \includegraphics[width=50mm]{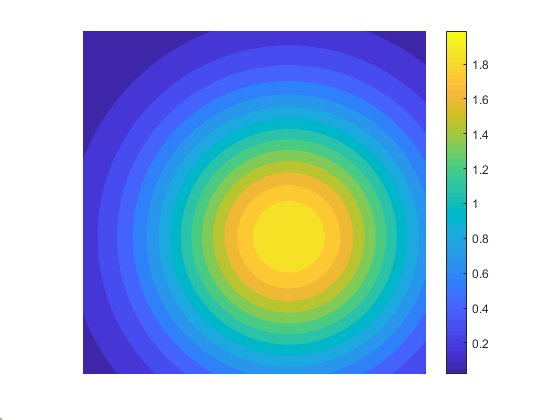} &
        \includegraphics[width=50mm]{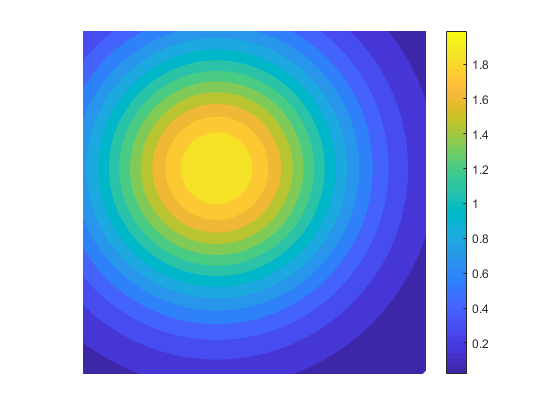}  \\
        {\captionfont(a) Original source }& {\captionfont(b) Source to mimic} \\
        \includegraphics[width=50mm]{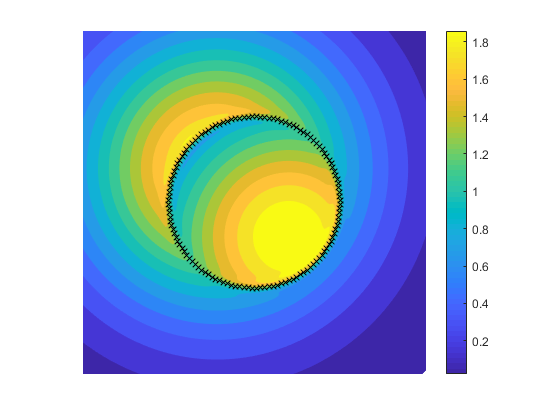} &
        \includegraphics[width = 50mm]{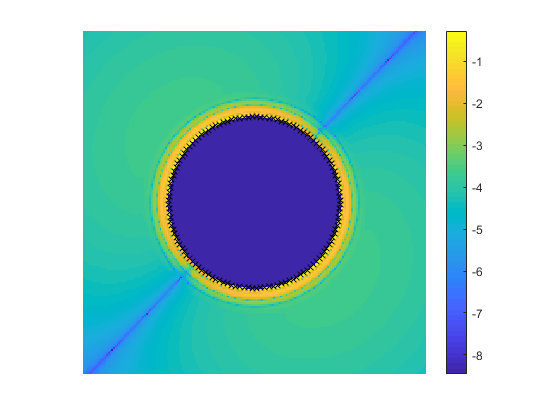}\\
        {\captionfont (c) Mimicked source} & {\captionfont (d) $\log_{10}$ plot of errors}
    \end{tabular}
    \caption{Time snapshots from a numerical example of the source mimicking problem with point sources at $t=0.2$s. In (c) the original point source in (a) is made to appear as the point source in (b) from the perspective outside of the cloaking region $\Omega$. The point source in (a) is at $x=(0.6,0.4)$ and $t=0$ and in (b) is at $x=(0.39,0.6)$ and $t=0$. A plot of the $\log_{10}$ errors in the exterior appears in (d). The diagonal line is an artifact due to the symmetry of the problem. A similar problem with point sources at $x=(0.5,0.4)$ and $x=(0.5,0.6)$ produces a horizontal line.
    }
    \label{fig:src_mimic_fields_cont}
\end{figure}

\subsection{Mimicking a passive object} 
\label{sec:mimic:obj}
Consider a passive object $R$ that is completely contained in an open set $\Omega$ and a source exterior to $\Omega$ which produces the incident field $u_i$. The goal is to make $R$ look like a different passive object, $S$, from thermal measurements outside $\overline{\Omega}$. To achieve this we can use linearity and both the interior and exterior boundary representation formulas to find monopole and dipole densities on $\partial \Omega$ producing a field
\begin{equation}
    u_c(x,t) = \begin{cases}
     -u_i(x,t), & x \in \Omega,\\
     v_s(x,t), & x \notin \overline{\Omega},
    \end{cases}
\end{equation}
where $v_s(x,t)$ is the scattered field corresponding to the object $S$, included also in $\Omega$, resulting from the incident field $u_i(x,t)$. In this fashion the total field is $u_i(x,t) + v_s(x,t)$ outside of $\overline{\Omega}$ and $0$ inside of $\Omega$.  Its associated scattered field is  $v_s(x,t)$ outside of $\overline{\Omega}$ and $0$ inside of $\Omega\setminus{R}$ as desired.  

To illustrate the method we consider a ``kite'' object with homogeneous Dirichlet boundary conditions and make it appear as a ``flower'' object with identical boundary conditions.  Here the field $u_i$ is generated by a point source at $x=(0.25,0.5)$ and $t=0$. For the heat equation we took $k=0.2$ and  the domain is $\Omega = B(x_0,r)$ with $x_0 = (0.5,0.5)$ and $r=0.25$. We computed the fields on the unit square $[0,1]^2$ with a $200\times 200$ uniform grid. The field $u_c$ is found by approximating the integral \eqref{eqn:brep} using the midpoint rule in time with 180 equal length subintervals of $[0,0.05]$  and the trapezoidal rule on $\partial \Omega$ with 128 uniformly spaced points. A more detailed explanation, including how the scattered fields are calculated, is in \cref{sec:numerics}. \Cref{fig:conotur_mimic_fields}(a) and (b) show the scattered fields from two different objects and \cref{fig:conotur_mimic_fields}(c) shows the mimicked scattered field. Because we are approximating the fields numerically, the field $u_i + u_c$ is very close to zero in $\Omega$, but not exactly zero. The errors (which are not reported here) are larger near the boundary and decay as we move outwards, as we observed in \cref{sec:irheq}.

\begin{figure}
    \centering
    \begin{tabular}{p{0.33\textwidth}p{0.33\textwidth}p{0.33\textwidth}}
    \includegraphics[width=0.4\textwidth]{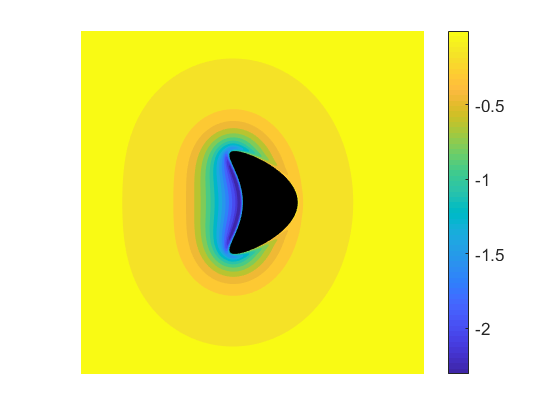} & \includegraphics[width=0.4\textwidth]{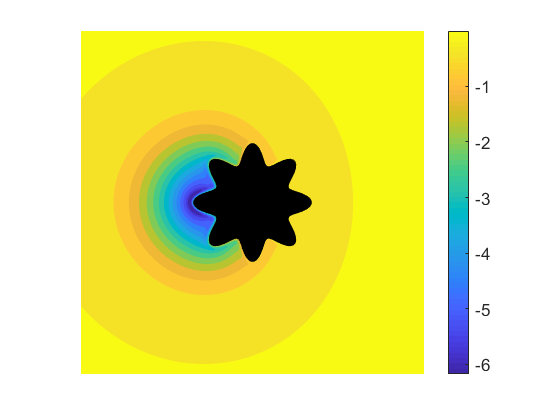} & \includegraphics[width=0.4\textwidth ]{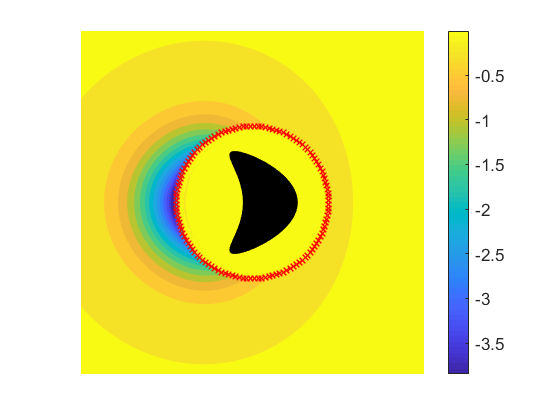} \\
    {\captionfont (a) Scattered field from a kite} &  {\captionfont (b) Scattered field from a flower} &{\captionfont (c) Mimicked scattered field }
    \end{tabular}
    \caption{Numerical example of the object mimicking problem with a ``kite'' and ``flower" object, both with homogenous Dirichlet boundary conditions. A snapshot the scattered field from the original object at time $t=0.05$s appears in (a). In (c), the scattered field from the original object is made to appear as the scattered field from the object in (b) from the perspective of thermal measurements outside of the cloaking region $\overline{\Omega}$.}
    
    \label{fig:conotur_mimic_fields}
\end{figure}

\begin{remark}
Although we have only shown mimicking of passive objects, the same techniques outlined in \cref{rmk:other}, could be used to suppress the field generated by an active object, which is part of what needs to be done to mimic an active object.
\end{remark}

\section{A simple numerical approach to potential theory for the heat equation}
\label{sec:numerics}

The boundary representation formulas \eqref{eqn:brep} and \eqref{eqn:outer_brep} can expressed very efficiently in terms of the single and double layer potentials for the heat equation defined for $t>0$ by
\[
\begin{aligned}
\mathcal{K}_0(\psi)(x,t)&=\int_0^t ds\int_{\partial \Omega} dS(y) [\psi (y,s)K(x-y,t-s)], \; x \notin \partial \Omega,\\
\mathcal{K}_1(\varphi)(x,t)&=\int_0^t ds\int_{\partial \Omega} dS(y) [\varphi (y,s)\frac{\partial K}{\partial n}(x-y,t-s)], \; x \notin \partial \Omega,
\end{aligned}
\]
as well as the corresponding boundary layer operators given for $t>0$ by
\[
\begin{aligned}
\mathcal{V}(\psi)(x,t)&=\int_0^t ds\int_{\partial \Omega} dS(y) [\psi (y,s)K(x-y,t-s)], \; x \in \partial \Omega , \\
\mathcal{K}(\varphi)(x,t)&=\int_0^t ds\int_{\partial \Omega} dS(y) [\varphi (y,s)\frac{\partial K}{\partial n}(x-y,t-s)]\; x \in \partial \Omega,
\end{aligned}
\]
where $\phi(x,t)$ and $\psi(x,t)$ are time dependent densities defined on $\partial\Omega$. A full derivation of these and other potential theory operators for the heat equation and their properties can be found in  \cite{Costabel:1990:BIO,Kress:2014:LIE,Costabel:2017:TPB}. For a review of classic potential theory see e.g.  \cite{Costabel:1988:BIO,Colton:2013:IAE,Kress:2014:LIE,Ammari:2009:LPT}.

With the potential theory notation, the interior boundary representation formula \eqref{eqn:brep}, with zero initial condition, becomes
\begin{equation}\label{eqn:brep:op}
    u_\Omega(x,t) = \mathcal{K}_0\Big(\frac{\partial u}{ \partial n}\Big)(x,t)-\mathcal{K}_1(u)(x,t).
\end{equation}
Whereas the exterior boundary representation formula \eqref{eqn:outer_brep} becomes
\begin{equation}\label{eqn:outer_brep:op}
    v_\Omega(x,t) = \mathcal{K}_1(v)(x,t)-\mathcal{K}_0\Big(\frac{\partial v}{\partial n}\Big)(x,t).
\end{equation}

Galerkin methods are commonly used to approximate the spatial integrals in \cref{eqn:outer_brep:op,eqn:brep:op}, with a number of different approaches to deal with the integration in time. For instance time marching \cite{Pina:1984:ATH}, time-space Galerkin methods \cite{Noon:1988:SLH,Dohr:2019:DPS} convolution quadrature \cite{Qiu:2019:TDB} and collocation \cite{Hamina:1994:SCM}. For simplicity we opted for an approach based on the trapezoidal rule for the integration on $\partial\Omega$ and the midpoint rule for the time convolution.
To be more precise, there are two convolutions that need to be evaluated in order to calculate the boundary representations in \eqref{eqn:brep:op} and \eqref{eqn:outer_brep:op}; a convolution in space and a convolution in time. 
For the spatial integration, trapezoidal rule is used with uniformly placed points on a parametric representation of $\partial \Omega$. Since this amounts to integrating a periodic function, we can expect that the convergence rate of the trapezoidal rule depends explicitly on the rate of decay of the Fourier coefficients of the function to be integrated \cite{Hyde:2003:FHO}. Due to the smoothness of the heat kernel, exponential convergence is expected. For the integration in time, the convolutions are of the form
\[
 \int_0^t ds\;g(s) f(t-s).
\] 
These convolutions are approximated with the midpoint rule as follows 
\begin{equation}
 \int_0^{j\Delta t} ds\; g(s) f(j\Delta t-s) \approx \Delta t \sum_{k=1}^j g( (k-\frac{1}{2})\Delta t) f((j-k)\Delta t + \frac{1}{2}\Delta t),
 \label{eq:midpoint}
\end{equation}
which avoids evaluating $f$ at $t=0$. This is handy in our case because of the singularity of the heat kernel at $(x,t)=(0,0)$, which only occurs for boundary layer operators. After this space-time discretization, the resulting approximations are by construction an interpolation on the space-time grid of a finite distribution of monopoles and dipoles located on $\partial \Omega$. These distributions are smooth solutions of the homogeneous heat equation on any open set of $(\real^d \setminus \partial \Omega) \times \real$. Moreover \cref{lem:heatkernel} (see eq. \eqref{eq:bounderiv2}) ensures that they are bounded for $t>0$ and $x\notin B(0,r)$ for $r$  large enough. Thus, as they satisfy  \cref{eq.bounduniqueness},  we can apply the maximum principle over any finite time window $[t_1,t_2]$ (with $0\leq t_1<t_2$) on any closed set of $\real^d \setminus \overline{\Omega}$ that does not intersect $\partial\Omega$.
We leave an accuracy study of the numerical approximation we use to future work. In particular there are more accurate ways of dealing with the approximation for $s\in[0,\Delta t]$ than the midpoint rule we use, see e.g. \cite{Pina:1984:ATH,Noon:1988:SLH,Hamina:1994:SCM,Tausch:2007:FMS}.

\subsection{Scattered field computation}
In the case of inclusions, finding the scattered field requires the use of the boundary layer operators. For simplicity we only consider a homogeneous Dirichlet inclusion, $R$, i.e. where the temperature on $\partial R$ is held constant at $0$. Neumann inclusions, and inclusions with varying thermal diffusivity, require the introduction of other boundary integral operators (adjoint double layer and hypersingular \cite{Costabel:1990:BIO}), but a similar numerical approach can be applied to this case.

The  idea is to look for a scattered field $u_s$ of the form \eqref{eqn:outer_brep:op} outside of $R$. 
Since the temperature at $\partial R$ is constant and equal to $0$, the scattered field satisfies $u_s|_{\partial R} = -u_i|_{\partial R}$. Hence we know the Dirichlet data on $\partial R$ in the representation formula \eqref{eqn:outer_brep:op}, but not the Neumann data. We treat this as an unknown boundary density $\psi$ in
\begin{equation}
 \label{eq:us_out}
 u_s(x,t) = \mathcal{K}_1 (-u_i|_{\partial R})(x,t) - \mathcal{K}_0 (\psi)(x,t).
\end{equation}
The latter formula is only valid for $x \notin  \overline{R}$. To obtain an integral equation on $\partial R$ we take the limit of \eqref{eq:us_out} as $x$ approaches $\partial R$. The limit could be different if we approach $\partial R$ from the inside or from the outside. The limits are given by the so-called jump relations. For $x \in \partial R$, the jump relation needed for the single layer potential is 
\begin{equation}
        \lim_{z \rightarrow x} \mathcal{K}_0 \psi(z,t) =\mathcal{V}\psi(x,t),
    \label{eqn:sl_jump}
\end{equation}
which holds for $z$ tending towards $\partial R$ from both the interior and exterior of $R$.
The jump relations needed for the double layer potential are
\begin{equation}
        \lim_{z \rightarrow x^+} \mathcal{K}_1 \varphi(z,t) = \frac{1}{2}\varphi(x,t)+\mathcal{K}\varphi(x,t),~\text{and}
        \lim_{z \rightarrow x^-} \mathcal{K}_1 \varphi(z,t) = \frac{-1}{2}\varphi(x,t)+\mathcal{K}\varphi(x,t),
    \label{eqn:dl_jump}
\end{equation}
where $z\to x^+$ denotes approaching $\partial R$ from the exterior of $R$ and $z\to x^-$ from the interior. Using these jump relations and \eqref{eq:us_out} we have 
\begin{equation}
\begin{aligned}
    \lim_{z \rightarrow x^+}u_s(z,t) &= \lim_{z \rightarrow x^+}\mathcal{K}_1(-u_i(z,t))- \lim_{z \rightarrow x^+}\mathcal{K}_0\psi(z,t) \\ \Rightarrow
    -u_i|_{\partial R} &= \frac{-1}{2}u_i|_{\partial R} +\mathcal{K}(-u_i|_{\partial R})-\mathcal{V}\psi.\\
\end{aligned}
\end{equation}
Rearranging terms yields a boundary integral equation for $\psi$
\begin{equation}
     \mathcal{V}\psi= \frac{u_i|_{\partial R}}{2}+\mathcal{K}(-u_i|_{\partial R}).
\label{eqn:calderon_first_row}
\end{equation}
We discretize \eqref{eqn:calderon_first_row} as a linear system where the unknown is $\psi$ evaluated on a uniform grid of $\partial R$ and of $[0,T]$. The boundary integral operators $\mathcal{V}$ and $\mathcal{K}$ in \eqref{eqn:calderon_first_row} are discretized  using the trapezoidal rule in space and the midpoint rule in time. For instance, $\mathcal{V}$ is approximated by a $MN \times MN$ matrix which is lower triangular by blocks, with each block of size $N \times N$. Here  $M$ is the number of time steps and $\partial R$ is approximated by a polygon with $N$ sides:
\begin{equation}
\mathcal{V} \approx
\left[
\begin{array}{cccc}
V_\frac{1}{2} &  &  &  \\[1.2em]
V_\frac{3}{2} & V_\frac{1}{2} &  &  \\
  \vdots &  & \ddots &  \\
V_{M-\frac{1}{2}} & V_{M-\frac{3}{2}} &  \cdots & V_\frac{1}{2}
\end{array}
\right].
\label{eq:vdiscr}
\end{equation}
The $V_i$ are $N \times N$ matrices with entries $(V_i)_{jk} = \ell_k K(x_j - x_k,i\Delta t)$, where $x_k$ is the center of the $k-$th segment of length $\ell_k$. Clearly the matrix in \eqref{eq:vdiscr} is guaranteed to be invertible if $V_{\frac{1}{2}}$ is invertible.
Though we have not studied the invertibility of $V_{\frac{1}{2}}$, we observe that it may become singular for a given spatial discretization if the temporal discretization is not fine enough. 

\section{Summary and perspectives}
\label{sec:summary}

We proposed a strategy for active cloaking for the time-dependent parabolic heat (or mass, or diffusive light) equation.  Similar to previous work for active cloaking for e.g. the Helmholtz or Laplace equation (e.g. modelling time-harmonic waves and thermostatic problems), our results rely on active sources coming from Green identities to reproduce solutions inside or outside a bounded domain. The idea is to use a source distribution on a closed surface to reproduce certain solutions to the heat equation inside the surface and the zero solution outside or vice versa. We give a growth condition which is sufficient to guarantee that a solution to the heat equation can be reproduced outside of a closed surface. We apply these theoretical results in four ways:  interior cloaking of a source, interior cloaking of an object, source mimicking, and object mimicking. For the cloaking problems the idea is to find an active surface that surrounds the object or source to make the object or source undetectable by thermal measurements outside the surface. In the mimicking problems, instead of making the object undetectable, we make the source or object appear as a different source or object from the perspective of thermal measurements outside the cloak. Our solution to these problems inherits the limitations of the reproduction method, namely that the fields must be known for all time on the active surface surrounding  the object or source we want to cloak or mimic. 
However the maximum principle guarantees some stability of our approach, as it is based on boundary representation formulas. This is a special feature of the heat equation. We illustrate our method with simple potential theory based simulations which are consistent with the heat equation and thus allow us to interpret the numerical errors using the maximum principle.
Our study was limited to the case where the initial condition is zero or harmonic. It may be possible to use \eqref{eqn:initial} to cancel out the initial condition with a boundary integral, but (a) it is not clear whether this mathematical construct has a physical interpretation and (b) this representation formula is only valid inside a bounded domain. These are questions we plan on exploring.
Although we focused on the isotropic heat equation, it may be possible to carry out a similar cloaking strategy for anisotropic media and when an advection term is added to the heat equation. Our approach could also be tied to the active cloaking strategies for the Helmholtz equation in \cite{Guevara:2009:AEC,Guevara:2009:BEO} by going to Fourier or Laplace domain in time, so that the active sources do not completely surround the object to achieve partial cloaking. For the Fourier domain, the physical interpretation is to study time-harmonic sources. Another open question is whether the growth condition we provided is optimal and how it is related to the uniqueness question for the exterior problem associated with different boundary condition types for the heat equation. 

We note that this work could be also adapted to cloaking \cite{Zhang:2008:CMM,Greenleaf:2008:CMM} and mimicking \cite{Diatta:2010:NSCM} of quantum matter waves. We believe this approach could be further generalized to the Fokker-Plank equation arising in statistical mechanics, which could applied to  gravitational systems for which some passive cloaking theory has been proposed \cite{Smerlak:2012:DCST}.
Finally, as some scattering cancellation based cloaking approach has been proposed for Maxwell-Cataneo heat waves \cite{Farhat:2019:MCHW}, we believe that our method for active cloaking could be also applied to such pseudo-waves.


{\bf Data Access.} The Matlab code to reproduce the figures \cref{fig:int_greens,fig:ext_greens,fig:spatial_errors,fig:temporal_errors,fig:errors,fig:contour_cloaking,fig:src_mimic_fields_cont,fig:conotur_mimic_fields} is available in repository [TBA]. For example, \cref{fig:int_greens} can be reproduced by running the Matlab script {\tt fig3.m}. The scripts for \cref{fig:int_greens,fig:ext_greens} generate additional plots that are mentioned in the respective figure captions. \Cref{fig:contour_cloaking} is animated in {\tt movie1.mp4}. The spatial distribution for the monopole/dipole added noise (\cref{sec:irheq:num}) can be generated with the  scripts {\tt fig9supint.m} and {\tt fig9supext.m}.

{\bf Author Contributions.} SG suggested the problem and applications to other physical phenomena. MC, TD, and FGV proved the exterior representation formula and the other theoretical results. All authors designed the numerical experiments. TD and FGV performed the numerical experiments. All authors contributed to the writing.


{\bf Funding.} TD and FGV were supported by National Science Foundation Grant DMS-2008610.

{\bf Acknowledgemnts.} FGV acknowledges support from the Fresnel institute for a visit in January 2018 during which the idea for this article first came up. FGV also thanks the Jean Kuntzmann Laboratory for hosting the author during the 2017-2018 academic year. Last but not least, the authors acknowledge Graeme Milton for the exterior cloaking idea that inspired this work.


\bibliographystyle{abbrv}
\bibliography{thermal}

\begin{thebibliography}{99}

\bibitem{Nguyen:2015:ATC}
Nguyen DM, Xu H, Zhang Y, Zhang B. 2015  Active thermal cloak. {\em Applied
  Physics Letters} \textbf{107}, 121901.

\bibitem{DiSalvo:1999:TCP}
DiSalvo FJ. 1999  Thermoelectric Cooling and Power Generation. {\em Science}
  \textbf{285}, 703--706.

\bibitem{Xu:2019:DAT}
Xu L, Yang S, Huang J. 2019  Dipole-assisted thermotics: Experimental
  demonstration of dipole-driven thermal invisibility. {\em Phys. Rev. E}
  \textbf{100}, 062108.

\bibitem{Peralta:2019:CHM}
Peralta I, Fachinotti V, Álvarez Hostos J. 2019  A Brief Review on Thermal
  Metamaterials for Cloaking and Heat Flux Manipulation. {\em Advanced
  Engineering Materials} \textbf{22}, 1901034.

\bibitem{Guenneau:2013:FSL}
Guenneau S, Puvirajesinghe TM. 2013  Fick's second law transformed: one path to
  cloaking in mass diffusion. {\em Journal of The Royal Society Interface}
  \textbf{10}, 20130106.

\bibitem{Puvirajesinghe:2017:FSL}
Puvirajesinghe TM, Zhi ZL, Craster R, Guenneau S. 2018  Tailoring drug release
  rates in hydrogel-based therapeutic delivery applications using graphene
  oxide. {\em Journal of The Royal Society Interface} \textbf{15}, 20170949.

\bibitem{Zeng:2013:CID}
Zeng L, Song R. 2013  Controlling chloride ions diffusion in concrete. {\em
  Scientific Reports} \textbf{3}, 3359.

\bibitem{Schittny:2014:DLSM}
Schittny R, Kadic M, Buckmann T, Wegener M. 2014  Invisibility cloaking in a
  diffusive light scattering medium. {\em Science} \textbf{345}, 427--429.

\bibitem{Guenneau:2012:TTC}
Guenneau S, Amra C, Veynante D. 2012  Transformation thermodynamics: cloaking
  and concentrating heat flux. {\em Opt. Express} \textbf{20}, 8207--8218.

\bibitem{Ma:2013:TTC}
Ma Y, Lan L, Jiang W, Sun F, He S. 2013  A transient thermal cloak
  experimentally realized through a rescaled diffusion equation with
  anisotropic thermal diffusivity. {\em NPG Asia Materials} \textbf{5},
  e73--e73.

\bibitem{Craster:2018:CMH}
Craster RV, Guenneau SRL, Hutridurga HR, Pavliotis GA. 2018  Cloaking via
  mapping for the heat equation. {\em Multiscale Model. Simul.} \textbf{16},
  1146--1174.

\bibitem{Schittny:2013:ETT}
Schittny R, Kadic M, Guenneau S, Wegener M. 2013  Experiments on Transformation
  Thermodynamics: Molding the Flow of Heat. {\em Phys. Rev. Lett.}
  \textbf{110}, 195901.

\bibitem{Petiteau:2014:SEETC}
Petiteau D, Guenneau S, Bellieud M, Zerrad M, Amra C. 2015  Spectral
  effectiveness of engineered thermal cloaks in the frequency regime. {\em
  Scientific reports} \textbf{4}, 7386.

\bibitem{Miller:2005:OPC}
Miller DAB. 2006  On perfect cloaking. {\em Opt. Express} \textbf{14},
  12457--12466.

\bibitem{Ffowcs:1984:RLA}
Ffowcs~Williams JE. 1984  Review Lecture: Anti-Sound. {\em Proc. R. Soc. A}
  \textbf{395}, 63--88.

\bibitem{Elliot:1991:ACO}
Nelson P, Elliot SJ. 1991 {\em Active Control of Sounds}.
Academic Press, New York first edition.

\bibitem{Guevara:2009:BEO}
Guevara~Vasquez F, Milton GW, Onofrei D. 2009a  Broadband exterior cloaking.
  {\em Opt. Express} \textbf{17}, 14800--14805.

\bibitem{Guevara:2009:AEC}
Guevara~Vasquez F, Milton GW, Onofrei D. 2009b  Active Exterior Cloaking for
  the 2D {L}aplace and {H}elmholtz Equations. {\em Phys. Rev. Lett.}
  \textbf{103}, 073901.

\bibitem{Guevara:2011:ECA}
Guevara~Vasquez F, Milton GW, Onofrei D. 2011  Exterior cloaking with active
  sources in two dimensional acoustics. {\em Wave Motion} \textbf{48},
  515--524.
Special Issue on Cloaking of Wave Motion.

\bibitem{Norris:2012:SAF}
Norris AN, Amirkulova FA, Parnell WJ. 2012  Source amplitudes for active
  exterior cloaking. {\em Inverse Problems} \textbf{28}, 105002.

\bibitem{Guevara:2012:MAT}
Guevara~Vasquez F, Milton GW, Onofrei D. 2012  Mathematical analysis of the two
  dimensional active exterior cloaking in the quasistatic regime. {\em Analysis
  and Mathematical Physics} \textbf{2}, 231--246.

\bibitem{Guevara:2013:TEA}
Guevara~Vasquez F, Milton GW, Onofrei D, Seppecher P. 2013  Transformation
  elastodynamics and active exterior cloaking. In Craster RV, Guenneau S,
  editors, {\em Acoustic metamaterials: Negative refraction, imaging, lensing
  and cloaking}. Springer.

\bibitem{Norris:2014:AEC}
Norris AN, Amirkulova FA, Parnell WJ. 2014  Active elastodynamic cloaking. {\em
  Math. Mech. Solids} \textbf{19}, 603--625.

\bibitem{O'Neill:2015:ACO}
O'Neill J, Selsil O, McPhedran RC, Movchan AB, Movchan NV. 2015  Active
  cloaking of inclusions for flexural waves in thin elastic plates. {\em Quart.
  J. Mech. Appl. Math.} \textbf{68}, 263--288.

\bibitem{McPhedran:2016:ACO}
O'Neill J, Selsil O, McPhedran RC, Movchan AB, Movchan NV, Henderson~Moggach C.
  2016  Active cloaking of resonant coated inclusions for waves in membranes
  and {K}irchhoff plates. {\em Quart. J. Mech. Appl. Math.} \textbf{69},
  115--159.

\bibitem{Avanzini:2020:CC}
Avanzini F, Falasco G, Esposito M. 2020  Chemical cloaking. {\em Phys. Rev. E}
  \textbf{101}, 060102.

\bibitem{MA:2013:EOA}
Ma Q, Mei ZL, Zhu SK, Jin TY, Cui TJ. 2013  Experiments on Active Cloaking and
  Illusion for {L}aplace Equation. {\em Phys. Rev. Lett.} \textbf{111}, 173901.

\bibitem{Ma:2016:OAC}
Ma Q, Yang F, Jin TY, Mei ZL, Cui TJ. 2016  Open active cloaking and illusion
  devices for the Laplace equation. {\em Journal of Optics} \textbf{18},
  044004.

\bibitem{Selvanayagam:2013:EDO}
Selvanayagam M, Eleftheriades GV. 2013  Experimental Demonstration of Active
  Electromagnetic Cloaking. {\em Phys. Rev. X} \textbf{3}, 041011.

\bibitem{Lai:2009:IOT}
Lai Y, Ng J, Chen H, Han D, Xiao J, Zhang ZQ, Chan CT. 2009  Illusion Optics:
  The Optical Transformation of an Object into Another Object. {\em Phys. Rev.
  Lett.} \textbf{102}, 253902.

\bibitem{Zheng:2010:EOC}
Zheng HH, Xiao JJ, Lai Y, Chan CT. 2010  Exterior optical cloaking and
  illusions by using active sources: A boundary element perspective. {\em Phys.
  Rev. B} \textbf{81}, 195116.

\bibitem{Lions:1973:PDE}
Lions JL, Magenes E. 1973 {\em Non-homogenous Boundary Values Problem and
  Applications} vol. III.
Springer-Verlag.

\bibitem{Pina:1984:ATH}
Pina HLG, Fernandes JLM. 1984  Applications in transient heat conduction. In
  {\em Topics in boundary element research, {V}ol. 1} pp. 41--58. Springer,
  Berlin.

\bibitem{Costabel:1990:BIO}
Costabel M. 1990  Boundary integral operators for the heat equation. {\em
  Integral Equations Operator Theory} \textbf{13}, 498--552.

\bibitem{Friedman:1964:PDE}
Friedman A. 1964 {\em Partial differential equations of parabolic type}.
Prentice-Hall, Inc., Englewood Cliffs, N.J.

\bibitem{NIST:DLMF}
{\relax DLMF} {\it NIST Digital Library of Mathematical Functions}.
  http://dlmf.nist.gov/, Release 1.0.21 of 2018-12-15.
F.~W.~J. Olver, A.~B. {Olde Daalhuis}, D.~W. Lozier, B.~I. Schneider, R.~F.
  Boisvert, C.~W. Clark, B.~R. Miller and B.~V. Saunders, eds.

\bibitem{Brezis:1964:PDE}
Brezis H. 2011 {\em Functional analysis, Sobolev spaces and partial
  differential equations}.
Springer.

\bibitem{Evans:2010:PDE}
Evans LC. 2010 {\em Partial differential equations} vol. {19, }{\em Graduate
  Studies in Mathematics}.
American Mathematical Society second edition.

\bibitem{Kress:2014:LIE}
Kress R. 2014 {\em Linear integral equations.} vol. {82, }{\em Applied
  Mathematical Sciences.}
Springer third edition.

\bibitem{Tychonoff:1935:TDP}
Tychonoff A. 1935  Th\'eor\`emes d'unicit\'e pour l'\'equation de la chaleur.
  {\em Mat. Sb.} \textbf{42}, 199--216.

\bibitem{Arnold:1987:BIE}
Arnold DN, Noon PJ. 1987  Boundary integral equations of the first kind for the
  heat equation. In {\em Boundary elements {IX}, {V}ol. 3 ({S}tuttgart, 1987)}
  pp. 213--229. Comput. Mech., Southampton.

\bibitem{Noon:1988:SLH}
Noon PJ. 1988 {\em The single layer heat potential and {G}alerkin boundary
  element methods for the heat equation}.
PhD thesis.
Thesis (Ph.D.)--University of Maryland, College Park.

\bibitem{Qiu:2019:TDB}
Qiu T, Rieder A, Sayas FJ, Zhang S. 2019  Time-domain boundary integral
  equation modeling of heat transmission problems. {\em Numer. Math.}
  \textbf{143}, 223--259.

\bibitem{Dohr:2019:DPS}
Dohr S. 2019 {\em Distributed and Preconditioned Space–Time Boundary Element
  Methods for the Heat Equation}.
PhD thesis Graz University of Technology.

\bibitem{Colton:2013:IAE}
Colton D, Kress R. 2013 {\em Inverse acoustic and electromagnetic scattering
  theory} vol. {93, }{\em Applied Mathematical Sciences}.
Springer, New York third edition.

\bibitem{Lions:1972:PDE}
Lions JL, Magenes E. 1972 {\em Non-homogenous Boundary Values Problem and
  Applications} vol.~II.
Springer-Verlag.

\bibitem{Ammari:2005:DATI}
Ammari H, Iakovleva E, Kang H, Kim K. 2005  Direct Algorithms for Thermal
  Imaging of Small Inclusions. {\em Multiscale Model. Simul.} \textbf{4},
  1116--1136.

\bibitem{Hohage:2005:NSH}
Hohage T, Sayas F. 2005  Numerical solution of a heat diffusion problem by
  boundary element methods using the Laplace transform. {\em Numerische
  Mathematik} \textbf{102}, 67--92.

\bibitem{Costabel:2017:TPB}
Costabel M, Sayas F. 2017  Time‐dependent problems with the boundary integral
  equation method. {\em Encyclopedia of Computational Mechanics Second Edition}
  pp. 1--24.

\bibitem{Costabel:1988:BIO}
Costabel M. 1988  Boundary integral operators on {L}ipschitz domains:
  elementary results. {\em SIAM J. Math. Anal.} \textbf{19}, 613--626.

\bibitem{Ammari:2009:LPT}
Ammari H, Kang H, Lee H. 2009 {\em Layer potential techniques in spectral
  analysis} vol. 153{\em Mathematical Surveys and Monographs}.
American Mathematical Society, Providence, RI.

\bibitem{Hamina:1994:SCM}
Martti H, Saranen J. 1994  On the spline collocation method for the
  single-layer heat operator equation. {\em Mathematics of Computation}
  \textbf{62}, 41--64.

\bibitem{Hyde:2003:FHO}
Hyde EM. 2003 {\em Fast, high-order methods for scattering by inhomogeneous
  media}.
ProQuest LLC, Ann Arbor, MI.
Thesis (Ph.D.)--California Institute of Technology.

\bibitem{Tausch:2007:FMS}
Tausch J. 2007  A fast method for solving the heat equation by layer
  potentials. {\em J. Comput. Phys.} \textbf{224}, 956--969.

\bibitem{Zhang:2008:CMM}
Zhang S, Genov D, Sun C, Zhang X. 2008  Cloaking of matter waves. {\em Phys.
  Rev. Lett.} \textbf{100}, 123002.

\bibitem{Greenleaf:2008:CMM}
Greenleaf A, Kurylev Y, Lassas M, Uhlmann G. 2008  Isotropic transformation
  optics: approximate acoustic and quantum cloaking. {\em New J. Phys.}
  \textbf{10}, 115024.

\bibitem{Diatta:2010:NSCM}
Diatta A, Guenneau S. 2010  Non-singular cloaks allow mimesis. {\em Journal of
  Optics} \textbf{13}, 024012.

\bibitem{Smerlak:2012:DCST}
Smerlak M. 2012  Diffusion in curved spacetimes. {\em New J. Phys.}
  \textbf{14}, 023019.

\bibitem{Farhat:2019:MCHW}
Farhat M, Guenneau S, Chen P, Alu A, Salama K. 2019  Scattering
  cancellation-based cloaking for the Maxwell-Cattaneo heat waves. {\em
  Physical Review Applied} \textbf{11}, 044089.

\end{thebibliography}

\end{document}